\documentclass[11pt]{article}
\usepackage[normalem]{ulem} 

\usepackage{graphicx,amsmath,amsfonts,amssymb,color,bbm,bm,geometry}

\graphicspath{{figures/}}
\usepackage{subcaption}

\usepackage{hyperref}

\usepackage{pdflscape}

\newcommand{\blue}[1]{\textcolor{blue}{#1}}

\usepackage{epsfig}

 \newcommand{\pend}{\hfill \thicklines \framebox(5.5,5.5)[l]{}}
 \newenvironment{proof}{\noindent {\sc  Proof.} \rm}{\pend}

\numberwithin{equation}{section}

 \newtheorem{theorem}{Theorem}
 \newtheorem{lemma}{Lemma}[section]

 \newtheorem{remark}{Remark}[section]

 \newtheorem{example}{Example}[section]
 
 \newtheorem{corollary}{Corollary}[section]
 \newtheorem{definition}{Definition}[section]

\makeatletter
 \newif\if@borderstar
 \def\bordermatrix{\@ifnextchar*{%
 \@borderstartrue\@bordermatrix@i}{\@borderstarfalse\@bordermatrix@i*}%
 }
 \def\@bordermatrix@i*{\@ifnextchar[{\@bordermatrix@ii}{\@bordermatrix@ii[()]}}
 \def\@bordermatrix@ii[#1]#2{%
 \begingroup
 \m@th\@tempdima8.75\p@\setbox\z@\vbox{%
 \def\cr{\crcr\noalign{\kern 2\p@\global\let\cr\endline }}%
 \ialign {$##$\hfil\kern 2\p@\kern\@tempdima & \thinspace %
 \hfil $##$\hfil && \quad\hfil $##$\hfil\crcr\omit\strut %
 \hfil\crcr\noalign{\kern -\baselineskip}#2\crcr\omit %
 \strut\cr}}%
 \setbox\tw@\vbox{\unvcopy\z@\global\setbox\@ne\lastbox}%
 \setbox\tw@\hbox{\unhbox\@ne\unskip\global\setbox\@ne\lastbox}%
 \setbox\tw@\hbox{%
 $\kern\wd\@ne\kern -\@tempdima\left\@firstoftwo#1%
 \if@borderstar\kern2pt\else\kern -\wd\@ne\fi%
 \global\setbox\@ne\vbox{\box\@ne\if@borderstar\else\kern 2\p@\fi}%
 \vcenter{\if@borderstar\else\kern -\ht\@ne\fi%
 \unvbox\z@\kern-\if@borderstar2\fi\baselineskip}%
 \if@borderstar\kern-2\@tempdima\kern2\p@\else\,\fi\right\@secondoftwo#1 $%
 }\null \;\vbox{\kern\ht\@ne\box\tw@}%
 \endgroup
 }
 \makeatother

\begin{document}


\title{Queues with Correlated Service Times --- the $M/M_D/c$ Model}
\author{Qihui Bu\footnote{School of Communications and Information Engineering, Nanjing University of Posts and Telecommunications, \href{mailto:buqihui@njupt.edu.cn}{buqihui@njupt.edu.cn}}, \and Suman Thapa\footnote{School of Mathematics and Statistics, Carleton University, \href{mailto:sumanthapa@cmail.carleton.ca}{sumanthapa@cmail.carleton.ca}} \and Yiqiang Q. Zhao\footnote{School of Mathematics and Statistics, Carleton University, \href{mailto:zhao@math.carleton.ca}{zhao@math.carleton.ca}}
}
\date{\blue{(Updated June 19, 2026)}}

\maketitle

\begin{abstract}
This paper studies multi-server queueing systems with correlated service
times, modeled as the $M/M_D/c$ queue, which is a natural extension of the recent work by Thapa and Zhao~\cite{Thapa-Zhao:2026}. In this model, arrivals follow
a Poisson process, while service times across servers exhibit dependence
captured by the Marshall--Olkin multivariate exponential distribution (MO-MVED).

We first develop a rigorous sample-path construction of the system and
establish that the resulting queueing process is a continuous-time Markov
chain. We then analyze the stationary behavior of the $M/M_D/c$ model. In the
homogeneous case, we derive a complete solution via geometric tail
structure and explicit boundary equations, recovering a tractable
one-dimensional representation. In the heterogeneous case, we establish a
general framework combining a geometric tail with a finite boundary
system, and prove existence, uniqueness, and nonnegativity of the
stationary distribution.
The above results provide a unified analytic framework extending classical $M/M/c$ theory to correlated-service
settings, and reveal how dependence among service times fundamentally affects system performance and structure.

Beyond the $M/M_D/c$ model, We next study the interplay between Marshall--Olkin service
dependence and queue-state Markovianity. On the one hand, Marshall--Olkin dependent service completions are shown to
preserve Markovianity for a broad class of queueing systems. On the other hand, if a queueing process admits a Markovian state description without tracking service ages, residual service times, or service phases, then
its service mechanism must satisfy a weak multivariate lack-of-memory property and consequently belongs to the Marshall--Olkin family. These results provide a probabilistic foundation for the use of
Marshall--Olkin multivariate exponential service times in Markovian queueing models.
\end{abstract}

\section{Introduction}

Queueing systems with multiple servers are a fundamental component of
stochastic modeling, with applications ranging from telecommunications
and manufacturing to modern cloud computing and service systems. In the
classical $M/M/c$ model, service times at different servers are assumed
to be independent exponential random variables. While this assumption
leads to tractable models, it fails to capture many realistic scenarios
in which service mechanisms exhibit dependence across servers.

Dependence in service times naturally arises in systems subject to
shared external shocks, common workload fluctuations, or coordinated
service mechanisms. Modeling such dependence has long been recognized as
a challenging problem. One of the most important and tractable
(positive) dependence structures is provided by MO-MVED, originally introduced in
\cite{Marshall-Olkin:1967a} through a shock-based construction.

In this paper, we study a multi-server queueing system with dependent
service times modeled by the MO-MVED, referred to as the
$M/M_D/c$ queue, which is a generalized model of the $M/M_D/2$ system studied in \cite{Thapa-Zhao:2026}, recently. This model generalizes the classical $M/M/c$ queue by
allowing simultaneous service completions across subsets of servers,
while preserving analytical tractability through the exponential
structure.

Our study is motivated by two fundamental questions. First, how can one
construct and analyze an $M/M_D/c$ queue whose service times follow a
Marshall--Olkin multivariate exponential dependence structure? Second,
what is the relationship between Marshall--Olkin service dependence and
queue-state Markovianity in more general queueing systems? In
particular, we seek to understand both how Marshall--Olkin dependence
preserves Markovianity and how queue-state Markovianity, in the absence
of service-age information, leads naturally to a weak multivariate
lack-of-memory property and hence to the Marshall--Olkin family.

To address the first question, we develop a sample-path construction of
the $M/M_D/c$ queue based on independent Poisson shock processes acting
on subsets of servers. Using this construction, we prove that the system
admits a Markovian state description, namely that the resulting queueing
process is a continuous-time Markov chain. We then derive the transition
structure of the process and formulate the corresponding balance
equations, providing a rigorous probabilistic foundation for the
subsequent stationary analysis.

We next analyze the stationary distribution of the $M/M_D/c$ queue.
Compared with the case $c=2$, several new difficulties arise. First, the
characterization of simultaneous departure rates from boundary states
requires marginal Marshall--Olkin parameters, including higher-order
joint marginals, to be expressed in terms of the parameters of the
original $c$-dimensional Marshall--Olkin distribution. Second, the
characteristic equation is no longer quadratic. Third, establishing the
consistency between the boundary equations and the geometric tail becomes
substantially more involved than in the two-server case.

In the homogeneous setting, where service parameters depend only on the
number of servers involved in a shock, the system reduces to a
one-dimensional Markov chain. We prove that the stationary distribution
admits a geometric tail and derive explicit boundary equations leading
to a complete closed-form solution. In the heterogeneous setting, we
develop a general framework combining a geometric tail with a finite
boundary system, and establish existence, uniqueness, and nonnegativity
of the stationary distribution through an $M$-matrix approach.

Beyond the specific $M/M_D/c$ model, we investigate the relationship
between Marshall--Olkin service dependence and queue-state Markovianity
in a broad class of queueing systems. We show that Marshall--Olkin
dependent service completions preserve Markovianity under general
state-dependent routing and scheduling rules. Conversely, for queueing
systems whose state descriptor does not record elapsed service times,
residual service times, or service phases, queue-state Markovianity
implies a weak multivariate lack-of-memory property and therefore leads
naturally to the Marshall--Olkin multivariate exponential family. These
results provide a probabilistic foundation for the use of
Marshall--Olkin service-time dependence in Markovian queueing models.

The main contributions of this paper can be summarized as follows:
\begin{itemize}
\item We provide a rigorous sample-path construction of the $M/M_D/c$
queue based on the Marshall--Olkin shock representation and prove that
the resulting queueing process is a continuous-time Markov chain.

\item We derive a complete stationary analysis of the homogeneous
$M/M_D/c$ model. In particular, we establish the geometric tail
structure of the stationary distribution, derive explicit boundary
equations, and obtain a closed-form solution.

\item We develop a general analytical framework for the heterogeneous
$M/M_D/c$ model by combining a geometric tail with a finite boundary
system. Using an $M$-matrix approach, we establish existence,
uniqueness, and nonnegativity of the stationary distribution.

\item Beyond the $M/M_D/c$ model, we establish a connection between
Marshall--Olkin service dependence and queue-state Markovianity in a
broad class of queueing systems. We show that Marshall--Olkin service
dependence preserves Markovianity, while queue-state Markovianity
without service-age information implies a weak multivariate
lack-of-memory property and hence a Marshall--Olkin service structure.
\end{itemize}

The study of queueing systems with correlated service times started by Kelly \cite{Kelly:1976} and Mitchell \textit{et al.} \cite{Mitchell et al:1977}. Majority of the literature studies has focused on tandem queues.
We refer readers to \cite{Thapa-Zhao:2026} for a thorough literature review on queueing systems with correlated service times, and also for more motivations of studying this type of models.

The remainder of the paper is organized as follows. In
Section~2, we introduce the Marshall--Olkin multivariate exponential
distribution and its key properties. In Section~3, we present the
$M/M_D/c$ model and establish its Markovian structure. Section~4 is
devoted to the stationary analysis for the case of homogeneous servers and Section~5 for the case of heterogeneous servers. In Section~6, we establish a connection between Marshall-Olkin service dependence and queue-state Markovianity for a general class of queueing systems. Concluding remarks are given in the final section.

\section{Marshall-Olkin multivariate exponential service distribution}

For the Markov modelling of the $M/M_D/2$ queue, or justifying that the process of the $M/M_D/2$ queue is a Markov chain, the bivariate lack of memory property in weak sense is a key. It turns out that the Marshall-Olkin bivariate exponential distribution (MO-BVED) is the unique distribution for our purpose. 
The We now extend the definition of lack of memory in weak sense for a two-dimensional vector to a $c$-dimensional vector for $c \geq 2$,  and prove that MO-MVED satisfies lack of memory property. This property is a key in Markov modelling for the $M/M_D/c$ queues and other queueing systems with correlated service times.

Recall that for the bivariate case, we used the following property in conditional probability as our definition (for the lack of memory in weak sense):
\[
    P(X > x+t, Y > y+t | X >x, Y>y) =P(X>t, Y>t),\quad \text{all $x, y, t \geq 0$,}
\]
(see, for example, equation (2.9) in \cite{Marshall-Olkin:1967a}).
In Markov modelling, we also require the same property for marginal variables $X$ and $Y$, or
\[
    P(X > x+t| X >x) =P(X>t) \quad \text{and} \quad P(Y > y+t| Y >y) =P(Y>t)
\]
or both $X$ and $Y$ are exponential. Therefore, it is nature to extend the above concept of lack or memory property to the multi-variate case.

\begin{definition}\label{def:lack-of-memory}
Random vector $(X_1,\ldots,X_c)$ is said to satisfy the (multivariate) lack-of-memory property in weak sense if, for every nonempty subset
$A=\{i_1,\ldots,i_\ell\}\subseteq\{1,\ldots,c\}$,
\[
P(X_{i_j}>x_{i_j}+t,\ j=1,\ldots,\ell
\mid
X_{i_j}>x_{i_j},\ j=1,\ldots,\ell)
=
P(X_{i_j}>t,\ j=1,\ldots,\ell)
\]
for all $t\ge0$ and $x_{i_j}\ge0$, $j=1,\ldots,\ell$.
\end{definition}


We now introduce the Marshall-Olkin multivariate exponential distribution, which was originally proposed in Section~4 of \cite{Marshall-Olkin:1967a},  constructed using the shock model. We then show MO-MVED satisfies the lack of memory property in weak sense. 
\begin{definition}[\cite{Marshall-Olkin:1967a}] \label{def-MO-MVE}
For positive integer $c$, a random vector $\mathbf{X} = (X_1, \ldots, X_c) \in \mathbb{R}_+^c$ follows MO-MVED, denoted by $H$,  if its survival function $\bar{H}$ is given by
\begin{equation}\label{eqn:MO-MVE}
\bar{H}(\mathbf{x}) = \mathbb{P}(X_1 > x_1, \ldots, X_c > x_c) = \exp\left( - \sum_{\forall A \in \mathcal{P}} \mu_A \max_{i \in A} x_i \right),
\end{equation}
where
$\mathcal{P} = 2^{\{1,\ldots,c\}} \setminus \emptyset$ is the set of all nonempty subsets of $\{1,\ldots,c\}$ (its cardinality is $2^c-1$), and $\mu_A \geq 0$ is the rate parameter associated with the shock that simultaneously affects all components in $A$.
\end{definition}

\begin{example}[Three-Dimensional Case]
Let \( c = 3 \). Then,
\[
    \mathcal{P} = \{\{1\}, \{2\}, \{3\},\{1,2\}, \{1,3\}, \{2,3\}, \{1,2,3\} \}.
\]
Each subset \( A \in \mathcal{P}\) has an associated exponential rate \( \mu_A \). For simplicity, we write $\mu_i=\mu_{\{i\}}$,  $\mu_{i,j}=\mu_{\{i,j\}}$ and $\mu_{1,2,3}=\mu_{\{1,2,3\}}$.
The survival function is given as:
\begin{align*}
\bar{H}(x_1, x_2, x_3) = & \exp\Big(- \mu_{1} x_1
- \mu_{2} x_2
- \mu_{3} x_3
- \mu_{1,2} \max(x_1, x_2) \\
&- \mu_{1,3} \max(x_1, x_3)
- \mu_{2,3} \max(x_2, x_3)
- \mu_{1,2,3} \max(x_1, x_2, x_3)
\Big).
\end{align*}
\end{example}

\begin{remark}\label{rem:part-of-lack-of-memory}
In~\cite{Marshall-Olkin:1967a}, the authors pointed out (without a detailed proof) that the survival function $\bar{H}$ of the MO-MVED satisfies the following property:
\begin{equation} \label{eqn:H-Bar}
\bar{H}(\mathbf{x} + t\mathbf{1}) = \bar{H}(\mathbf{x}) \cdot \bar{H}(t\mathbf{1}),
\end{equation}
where $\mathbf{x} = (x_1, \ldots, x_c), \mathbf{1} = (1, \ldots, 1) \in \mathbb{R}^c$ and $t \geq 0$.
Since this property is a key to connect MO-MVED to lack of memory property in weak sense, we add the following proof:

\noindent{\sc Proof of \eqref{eqn:H-Bar}.}
\begin{align*}
\bar{H}(\mathbf{x} + t\mathbf{1})
&= \exp\left( - \sum_{A \in \mathcal{P}} \mu_A \max_{i \in A} (x_i + t) \right) \\
&= \exp\left( - \sum_{A \in \mathcal{P}} \mu_A \left( \max_{i \in A} x_i + t \right) \right) \\
&= \exp\left( - \sum_{A \in \mathcal{P}} \mu_A \max_{i \in A} x_i \right) \cdot \exp\left( - t \sum_{A \in \mathcal{P}} \mu_A \right) \\
&= \bar{H}(\mathbf{x}) \cdot \bar{H}(t\mathbf{1}).
\end{align*}
The last equality holds because for all $A \in \mathcal{P}$, $\max_{i \in A} t = t$, so
\[
\bar{H}(t\mathbf{1}) = \exp\left( - t \sum_{A \in \mathcal{P}} \mu_A \right).
\]
From the above, we conclude that
\[
\bar{H}(\mathbf{x} + t\mathbf{1}) = \bar{H}(\mathbf{x}) \cdot \bar{H}(t\mathbf{1}).
\] \pend

Therefore, according to the definition of lack of memory property in weak sense, we have
\[
\mathbb{P}(\mathbf{X} > \mathbf{x} + t\mathbf{1} \mid \mathbf{X} > \mathbf{x})
= \frac{\bar{H}(\mathbf{x} + t\mathbf{1})}{\bar{H}(\mathbf{x})}
= \bar{H}(t\mathbf{1})=\mathbb{P}(\mathbf{X} > t\mathbf{1} ) .
\] 

To show MO-MVED satisfies lack of memory property in weak sense, we need to show all its marginal distributions are also Marshall-Olkin, which is the next lemma. In this lemma, we also provide a new presentation for the survival function of a marginal distribution detailing the connection (see \eqref{eqn:marginal-rate}) between parameters in the marginal Marshall-Olkin distribution and in the (original) joint Marshall-Olkin distribution. This type of relationship has not been a focus in studies of Marshll-Olkin distributions in the literature, but it is important for our purpose.
\end{remark}

\begin{lemma}[Marginal Distribution of MO-MVED, \cite{Marshall-Olkin:1967a}] \label{lem:closure}
Let $\mathbf{X} = (X_1, \ldots, X_c)$ be a random vector following MO-MVED with survival function given in \eqref{eqn:MO-MVE}.
Then, for any nonempty subset \( T \subseteq \{1,\ldots,c\} \), the marginal vector \( X_T = (X_i)_{i \in T} \) has its survival function \( \bar{H}_T(\mathbf{x}_T) = \mathbb{P}(X_i > x_i, \forall i \in T) \) given by
\begin{equation}\label{eqn:barHofT}
\bar{H}_T(\mathbf{x}_T) = \exp\left( - \sum_{\forall A : A \cap T \neq \emptyset} \mu_A \max_{i \in A \cap T} x_i \right),
\end{equation}
where \(\mathbf{x}_T=(x_i)_{i \in T} \). Notice that if we denote $\mathcal{P}_T := 2^T \setminus \emptyset$, which is the power set of $T$ excluding the empty set, then $\mathcal{P}_T = \{ B \subset T: B= A \cap T \text{ with } A \subseteq \{1, 2, \ldots, c \}\setminus \emptyset \}$. Therefore, \( \bar{H}_T \) can be expressed as
\begin{equation}\label{eqn:barHofT-2}
  \bar{H}_T(\mathbf{x}_T) = \exp\left( - \sum_{\forall B \subseteq \mathcal{P}_T} \mu_B^{T} \max_{i \in B} x_i \right),
\end{equation}
where \( \mu_B^{T} \) is the shock rate associated with subset $B$ of components in the marginal distribution of \( X_T \), given by
\begin{equation} \label{eqn:marginal-rate}
  \mu_B^{T} = \sum_{A : A \cap T = B \neq \emptyset} \mu_A.
\end{equation}
\end{lemma}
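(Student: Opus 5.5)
The plan is to obtain the marginal survival function directly from the joint one in \eqref{eqn:MO-MVE} by sending the coordinates outside $T$ to zero. Since the components are nonnegative, $\mathbb{P}(X_i > x_i,\ \forall i\in T)$ equals $\bar{H}(\mathbf{x})$ evaluated at the vector $\mathbf{x}$ whose entries are $x_i$ for $i\in T$ and $0$ for $i\notin T$. The formula $\bar H$ is continuous in $\mathbf{x}$, so the limit as $x_j\downarrow 0$ for $j\notin T$ causes no difficulty. With this choice, each term $\mu_A \max_{i\in A} x_i$ in the exponent is handled by splitting into two cases.

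\emph{Case $A\cap T=\emptyset$.} All coordinates indexed by $A$ are zero, so the term vanishes.

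\emph{Case $A\cap T\neq\emptyset$.} Because all $x_i\ge 0$ and the coordinates in $A\setminus T$ are zero, the maximum over $A$ equals the maximum over $A\cap T$.

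Together the two cases give \eqref{eqn:barHofT} immediately.

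For the second representation, I will regroup the sum in \eqref{eqn:barHofT} according to the value of $B=A\cap T$. The map $A\mapsto A\cap T$ sends $\{A\in\mathcal{P}: A\cap T\neq\emptyset\}$ onto $\mathcal{P}_T$. It is surjective because $B$ itself is a preimage of $B$. The summand $\max_{i\in A\cap T}x_i=\max_{i\in B}x_i$ depends only on $B$, so the sum becomes
\[
\sum_{B\in\mathcal{P}_T}\Big(\sum_{A:\,A\cap T=B}\mu_A\Big)\max_{i\in B}x_i ,
\]
which is \eqref{eqn:barHofT-2} with rates \eqref{eqn:marginal-rate}. Along the way I will check the stated identity $\mathcal{P}_T=\{A\cap T\neq\emptyset\}$ using the same surjectivity. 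Finally, I will note that $\mu_B^T\ge 0$, so $X_T$ is again MO-MVED in the sense of Definition~\ref{def-MO-MVE} on the index set $T$.

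There is no real obstacle. The only points needing care are bookkeeping ones: the regrouping must be a partition of the index set (each $A$ with $A\cap T\ne\emptyset$ contributes to exactly one $B$), and the nonnegativity of the coordinates must be used to justify dropping the indices in $A\setminus T$ from the maximum.
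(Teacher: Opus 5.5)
Your proposal is correct and follows the paper's proof essentially step for step: you set the coordinates outside $T$ to zero, drop the shocks with $A\cap T=\emptyset$, reduce $\max_{i\in A}$ to $\max_{i\in A\cap T}$, and regroup by $B=A\cap T$ to read off $\mu_B^T$. One small tightening: the identity $\mathbb{P}(X_i>x_i,\ i\in T)=\bar H(\mathbf{x})$ (with zeros off $T$) needs $\mathbb{P}(X_j>0)=1$, i.e.\ no atom at $0$; nonnegativity and continuity of $\bar H$ do not give this, but it follows at once from $\bar H(\mathbf{0})=1$.
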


\proof
Let \( T \subseteq \{1, \ldots, c\} \) be a nonempty index set. Define the marginal vector \( \mathbf{X}_T = (X_i)_{i \in T} \), and let \( \mathbf{x}_T = (x_i)_{i \in T} \in \mathbb{R}_+^{|T|} \). We show that the marginal survival function \( \bar{H}_T(\mathbf{x}_T) = \mathbb{P}(X_i > x_i, i \in T) \) also has the Marshall--Olkin exponential form.

Let \( T \subseteq \{1,\ldots,c\} \) be a non-empty subset, and consider the marginal vector \( X_T = (X_i)_{i \in T} \). To obtain the survival function \( \bar{H}_T(\mathbf{x}_T) = \mathbb{P}(X_i > x_i, \forall i \in T) \), we set \( x_j = 0 \) for all \( j \notin T \), which is equivalent to conditioning on \( X_j > 0 \) for all \( j \notin T \):

Define a vector \( \mathbf{y} = (y_1, \ldots, y_c) \in \mathbb{R}^c \) by
\[
y_i =
\begin{cases}
x_i, & \text{if } i \in T, \\
0, & \text{if } i \notin T.
\end{cases}
\]
Then, the marginal survival function can be written as
\[
\bar{H}_T(\mathbf{x}_T) = \bar{H}(\mathbf{y}) = \exp\left( - \sum_{A \subseteq \{1,\ldots,c\}, A \neq \emptyset} \mu_A \max_{i \in A} y_i \right).
\]
We decompose the sum as follows:
\[
\sum_{A \subseteq \{1,\ldots,c\}, A \neq \emptyset} \mu_A \max_{i \in A} y_i = \sum_{A : A \cap T = \emptyset} \mu_A \max_{i \in A} y_i + \sum_{A : A \cap T \neq \emptyset} \mu_A \max_{i \in A} y_i.
\]

Since \( y_i = 0 \) for \( i \notin T \), it follows that if \( A \cap T = \emptyset \), then \( \max_{i \in A} y_i = 0 \), so that term contributes nothing. Therefore:
\[
\bar{H}_T(\mathbf{x}_T) = \exp\left( - \sum_{A : A \cap T \neq \emptyset} \mu_A \max_{i \in A \cap T} x_i \right).
\]
Let us define
\[
\mu_B^{T} := \sum_{A : A \cap T = B \neq \emptyset} \mu_A,
\]
then,
\[
\bar{H}_T(\mathbf{x}_T) = \exp\left( - \sum_{\emptyset \neq B \subseteq T} \mu_B^{T} \max_{i \in B} x_i \right).
\]

We can now conclude that the marginal vector \( X_T \) also follows a Marshall--Olkin exponential distribution on the index set \( T \) with its survival function \( \bar{H}_T(\mathbf{x}_T) \) given by \eqref{eqn:barHofT-2}, where \( \mu_B^{T} \) is the shock rate associated with subset $B$ of components in the marginal distribution of \( X_T \). \pend

\begin{remark}
It should be emphasized that in general
\[
\bar{H}_T(\mathbf{x}_T) \neq \exp\left( - \sum_{\substack{A \subseteq T \\ A \neq \emptyset}} \mu_A \max_{j \in A} x_j \right).
\]
See the following example for details.
\end{remark}

\begin{example}[Three dimensional case]
Let \(c =3\), and let \(T= \{1,3\}\). To compute the marginal survival function \( \bar{H}_{\{1,3\}}(x_1, x_3) \), we set \( x_2 = 0 \). Since \( \mathbb{P}(X_2 > 0) = 1 \), this effectively removes the effect of \( X_2 \) from the survival function:
\begin{align*}
\bar{H}_{\{1,3\}}(x_1, x_3) &= \exp\Big(- \mu_{1} x_1 - \mu _{3} x_3 - \mu_{1,3} \max(x_1, x_3) \Big) \\
&\quad \cdot \exp\Big( - \mu_{1,2} x_1 - \mu_{2,3} x_3 - \mu_{1,2,3} \max(x_1, x_3) \Big) \\
&= \exp\Big(- (\mu_{1} + \mu_{1,2}) x_1 - (\mu_{3} + \mu_{2,3}) x_3 \\
&\quad - (\mu_{1,3} + \mu_{1,2,3}) \max(x_1, x_3) \Big).
\end{align*}

This shows that the marginal distribution of \( (X_1, X_3) \) is also a bivariate Marshall--Olkin exponential distribution with updated parameters:
\begin{align*}
  &\mu_{1}^{\{13\}} = \mu_{1} + \mu_{1,2}, \\
  &\mu_{3}^{\{13\}} = \mu_{3} + \mu_{2,3}, \\
  &\mu_{1,3}^{\{13\}} = \mu_{1,3} + \mu_{1,2,3}.
\end{align*}
Thus, the marginal of a subset of variables also follows a Marshall--Olkin exponential distribution.

This example also demonstrate that
\[
    \bar{H}_{\{1,3\}}(x_1, x_3) \neq \exp\left( - \sum_{\substack{A \subseteq T \\ A \neq \emptyset}} \mu_A \max_{j \in A} x_j \right) = \exp (- (\mu_1 x_1 + \mu_3 x_3 + \mu_{13} \max\{x_1,x_3\})).
\]
\end{example}


\begin{lemma} \label{rem:part-of-lack-of-memory-2}
It follows from Lemma~\ref{lem:closure} and Remark~\ref{rem:part-of-lack-of-memory} (applied to all marginal distribution) that MO-MVED satisfies lack of memory property in weak sense.
\end{lemma}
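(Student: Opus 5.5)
The plan is to verify the defining identity of Definition~\ref{def:lack-of-memory} subset by subset, reducing each case to the one-line computation of Remark~\ref{rem:part-of-lack-of-memory}. Fix a nonempty $A=\{i_1,\ldots,i_\ell\}\subseteq\{1,\ldots,c\}$, $t\ge 0$, and $x_{i_j}\ge 0$. The conditional probability in the definition equals
\[
\frac{\bar H_A(\mathbf{x}_A+t\mathbf{1}_A)}{\bar H_A(\mathbf{x}_A)},
\]
where $\bar H_A$ is the marginal survival function of $\mathbf{X}_A=(X_i)_{i\in A}$ and $\mathbf{1}_A$ is the all-ones vector in $\mathbb{R}^{|A|}$. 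This quotient is well defined because $\bar H_A(\mathbf{x}_A)$ is an exponential of a finite quantity, hence strictly positive.

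\textbf{Step 1: the marginal is again Marshall--Olkin.} Apply Lemma~\ref{lem:closure} with $T=A$. By \eqref{eqn:barHofT-2}, $\bar H_A$ is a Marshall--Olkin survival function on the index set $A$, with nonnegative rates $\mu_B^{A}$ given by \eqref{eqn:marginal-rate}. These rates are nonnegative because each is a sum of the nonnegative $\mu_A$'s.

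\textbf{Step 2: rerun the factorization on the marginal.} The proof of \eqref{eqn:H-Bar} in Remark~\ref{rem:part-of-lack-of-memory} used only two facts: the survival function has the form $\exp(-\sum_B \mu_B\max_{i\in B}x_i)$ with $\mu_B\ge0$, and $\max_{i\in B}(x_i+t)=\max_{i\in B}x_i+t$. Both hold verbatim on the index set $A$, so
\[
\bar H_A(\mathbf{x}_A+t\mathbf{1}_A)=\bar H_A(\mathbf{x}_A)\,\bar H_A(t\mathbf{1}_A).
\]
Dividing by $\bar H_A(\mathbf{x}_A)$ and noting that $\bar H_A(t\mathbf{1}_A)=P(X_{i_j}>t,\ j=1,\ldots,\ell)$ gives exactly the identity required in Definition~\ref{def:lack-of-memory}. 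Since $A$ was arbitrary, the property holds for every nonempty subset. The case $A=\{1,\ldots,c\}$ recovers Remark~\ref{rem:part-of-lack-of-memory}, and singletons show that each $X_i$ is exponential with rate $\sum_{A\ni i}\mu_A$.

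There is no real obstacle here. The only point needing care is that the conditioning event in the definition involves only the coordinates in $A$. Conditioning must therefore be computed through the marginal $\bar H_A$, not by setting the other coordinates of the joint function to zero after shifting by $t$. Doing the latter would shift the unused coordinates as well and give the wrong event. Lemma~\ref{lem:closure} is precisely what lets us work with $\bar H_A$ in closed Marshall--Olkin form.
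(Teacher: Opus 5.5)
Your proposal is correct and follows the paper's own argument. Lemma~\ref{lem:closure} shows that each marginal $\bar H_A$ is again Marshall--Olkin, and rerunning the factorization \eqref{eqn:H-Bar} of Remark~\ref{rem:part-of-lack-of-memory} on that marginal gives the identity of Definition~\ref{def:lack-of-memory} for every nonempty $A$; you also make explicit what the paper leaves implicit, namely the positivity of the denominator, the nonnegativity of the marginal rates, and the fact that the shift by $t$ acts only on the coordinates in $A$.
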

The above lack of memory property is the key in justifying that the $M/M_D/c$ queueing process is a Markov chain.

In fact, the MO-MVED is the unique distribution satisfying the lack of memory property in weak sense. In \cite{Marshall-Olkin:1967a}, the authors explicitly state it for $c=2$, and in \cite{BernhartEtAl2015}, Bernhart \textit{et al.} indicated (with a short proof) that Marshall and Olkin’s original 1967 paper lifted the univariate exponential law to higher dimensions using a multivariate lack-of-memory property, and that the resulting functional equation has a unique solution, namely the Marshall–Olkin law. For the purpose of completeness, we add a detailed proof below. 

\begin{theorem}[Marshall--Olkin characterization] \label{thm:MO-characterization}
Assume that $\bar F_B$ is positive and continuous for every nonempty
$B\subseteq\{1,\ldots,c\}$ and that $\mathbb P(X_i>0)=1$ for all $i$.
Then $\mathbf X$ satisfies the weak multivariate lack-of-memory property
for every marginal subvector if and only if its survival function has the
Marshall--Olkin form
\[
\bar F(x_1,\ldots,x_c)
=
\exp\left\{
-\sum_{\emptyset\neq A\subseteq\{1,\ldots,c\}}
\mu_A \max_{i\in A} x_i
\right\},
\]
where $\mu_A\ge0$.
\end{theorem}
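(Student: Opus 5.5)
The ``if'' direction is already available: if $\bar F$ has the Marshall--Olkin form with $\mu_A\ge0$, then Lemma~\ref{lem:closure} shows that every marginal $\bar F_B$ is again of Marshall--Olkin form, and Remark~\ref{rem:part-of-lack-of-memory} gives $\bar F_B(\mathbf x_B+t\mathbf 1)=\bar F_B(\mathbf x_B)\bar F_B(t\mathbf 1)$. This is exactly the weak lack-of-memory property of Definition~\ref{def:lack-of-memory} (Lemma~\ref{rem:part-of-lack-of-memory-2}). The work is in the ``only if'' direction, which I would do in three steps: diagonal exponentiality, reduction of $\bar F$ to a telescoping product along the order statistics of $\mathbf x$, and identification of the rates $\mu_A$ together with their nonnegativity.

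\emph{Step 1 (diagonal exponentiality).} For each nonempty $B$, set $G_B(t)=\bar F_B(t\mathbf 1)$. Taking $\mathbf x_B=s\mathbf 1$ in the lack-of-memory identity gives $G_B(s+t)=G_B(s)G_B(t)$. Since $G_B$ is positive and continuous, this Cauchy equation forces $G_B(t)=e^{-\lambda_B t}$ for some $\lambda_B$, and $\lambda_B\ge0$ because $G_B$ is nonincreasing.

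\emph{Step 2 (reduction to the diagonal).} Fix $\mathbf x\in\mathbb R_+^c$ with order statistics $0=x_{(0)}\le x_{(1)}\le\cdots\le x_{(c)}$. Let $S_k$ be the set of indices whose value is at least $x_{(k)}$ (ties are handled by an arbitrary fixed ranking). Write $\mathbf x=x_{(1)}\mathbf 1+(\mathbf x-x_{(1)}\mathbf 1)$ and apply the lack-of-memory identity to the full vector. The remaining vector has zero coordinates outside $S_2$, and since $\mathbb P(X_i>0)=1$ its survival probability equals $\bar F_{S_2}$ evaluated at the restricted vector. Iterating this on the marginals $S_2,S_3,\ldots$ gives
\[
\bar F(\mathbf x)=\exp\Big\{-\sum_{k=1}^{c}\lambda_{S_k}\,(x_{(k)}-x_{(k-1)})\Big\}.
\]

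\emph{Step 3 (identification of $\mu_A$).} Let $\Lambda=\lambda_{\{1,\ldots,c\}}$ and define $\nu(C)=\Lambda-\lambda_{C^c}$, with $\nu(\emptyset)=0$ and $\nu(\{1,\ldots,c\})=\Lambda$. Define $\mu_A$ by M\"obius inversion,
\[
\mu_A=\sum_{C\subseteq A}(-1)^{|A|-|C|}\nu(C),
\]
so that $\nu(C)=\sum_{A\subseteq C}\mu_A$ and hence $\lambda_B=\sum_{A:\,A\cap B\neq\emptyset}\mu_A$. Substituting into Step~2 and regrouping by $A$ gives $\sum_k(x_{(k)}-x_{(k-1)})\sum_{A\cap S_k\ne\emptyset}\mu_A=\sum_A\mu_A\max_{i\in A}x_i$. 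This yields the Marshall--Olkin form, with rates that so far are only real numbers.

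The main obstacle is proving $\mu_A\ge0$, since the functional equation alone does not give it. Here I would use that $\bar F$ is a genuine survival function. For small $h>0$, consider the event that $X_i\in(0,h]$ for $i\in A$ and $X_j>h$ for $j\notin A$. Its probability is the rectangle difference $\sum_{C\subseteq A}(-1)^{|C|}\bar F(\mathbf y^C)$, where $\mathbf y^C$ equals $h$ on $A^c\cup C$ and $0$ on $A\setminus C$. By Step~2 and $\mathbb P(X_i>0)=1$, this equals $\sum_{C\subseteq A}(-1)^{|C|}e^{-h\lambda_{A^c\cup C}}$. The zeroth-order terms cancel because $A\neq\emptyset$. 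The first-order term is $-h\sum_{C\subseteq A}(-1)^{|C|}\lambda_{A^c\cup C}$, and since $-\lambda_{A^c\cup C}=\nu(A\setminus C)-\Lambda$ and the $\Lambda$ terms cancel, this equals $h\,\mu_A$. The probability is therefore $h\mu_A+O(h^2)\ge0$, and letting $h\downarrow0$ gives $\mu_A\ge0$.
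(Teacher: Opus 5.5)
Your proof is correct and follows the paper's route: the Cauchy equation on each diagonal, then telescoping along the order statistics using $\mathbb P(X_i>0)=1$, then M\"obius inversion to write $\Lambda(B)=\sum_{A\cap B\neq\emptyset}\mu_A$ and regroup into $\sum_A\mu_A\max_{i\in A}x_i$. The one real difference is the sign of $\mu_A$: the paper only asserts that $\Lambda$ is completely alternating and cites M\"obius inversion for capacities, whereas your small-$h$ rectangle computation, which gives probability $h\mu_A+O(h^2)$, actually proves $\mu_A\ge0$; also, your specialization $\mathbf x_B=s\mathbf 1$ in Step~1 is the right one, since the paper's choice $x_B=0$ only yields $g_B(0)=0$.
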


\proof
We prove the essential direction (the sufficiency follows directly from Lemma~\ref{rem:part-of-lack-of-memory-2}). Let
\[
g_B(x_B)=-\log \bar F_B(x_B).
\]
The weak lack-of-memory property gives
\[
g_B(x_B+t\mathbf 1_B)
=
g_B(x_B)+g_B(t\mathbf 1_B).
\]
Taking $x_B=0$ and using continuity, we obtain
\[
g_B(t\mathbf 1_B)=\Lambda(B)t,
\]
for some constant $\Lambda(B)\ge0$. Hence
\[
g_B(x_B+t\mathbf 1_B)
=
g_B(x_B)+\Lambda(B)t.
\]

Now fix $B=\{1,\ldots,c\}$. For $x=(x_1,\ldots,x_c)$, order the
coordinates as
\[
0\le x_{i_1}\le x_{i_2}\le\cdots\le x_{i_c}.
\]
Set $x_{i_0}=0$ and
\[
B_k=\{i_k,i_{k+1},\ldots,i_c\},\qquad k=1,\ldots,c.
\]
Applying the preceding identity successively and using
$\mathbb P(X_i>0)=1$, we get
\[
g(x)
=
\sum_{k=1}^c
\Lambda(B_k)(x_{i_k}-x_{i_{k-1}}).
\]
Equivalently,
\[
g(x)
=
\int_0^\infty
\Lambda(\{i:x_i>u\})\,du .
\]

The set function $\Lambda$ is increasing and completely alternating,
because the expression above is the negative logarithm of a survival
function. Hence, by the finite Möbius inversion for capacities, there
exist nonnegative constants
\[
\{\mu_A:\emptyset\neq A\subseteq\{1,\ldots,c\}\}
\]
such that
\[
\Lambda(B)
=
\sum_{A:\,A\cap B\neq\emptyset}\mu_A,
\qquad \emptyset\neq B\subseteq\{1,\ldots,c\}.
\]
Therefore
\[
g(x)
=
\int_0^\infty
\sum_{A:\,A\cap\{i:x_i>u\}\neq\emptyset}\mu_A\,du.
\]
Interchanging the finite sum and the integral gives
\[
g(x)
=
\sum_{\emptyset\neq A\subseteq\{1,\ldots,c\}}
\mu_A
\int_0^\infty
\mathbf 1\{A\cap\{i:x_i>u\}\neq\emptyset\}\,du.
\]
But
\[
\int_0^\infty
\mathbf 1\{A\cap\{i:x_i>u\}\neq\emptyset\}\,du
=
\max_{i\in A}x_i.
\]
Hence
\[
g(x)
=
\sum_{\emptyset\neq A\subseteq\{1,\ldots,c\}}
\mu_A \max_{i\in A}x_i,
\]
and therefore
\[
\bar F(x)
=
\exp\left\{
-\sum_{\emptyset\neq A\subseteq\{1,\ldots,c\}}
\mu_A \max_{i\in A}x_i
\right\}.
\]
%
\pend

This uniqueness property will be used to prove the connection between Marshall-Olkin service dependence and queue-state Markovianity for a general class of queueing systems in a later section.

\section{Model description and Markov chain}

In this section, we introduce the $M/M_D/c$ queueing model in detail, and prove the defined queueing process is a continuous-time Markov chain.
We first give a sample-path construction of the model, and then prove the Markov property of the queue-length process.
Specifically, it is a queueing system with a waiting space of infinite capacity, where the arrivals to this system follow a Poisson process, independent of service times, with rate $\lambda$, and the service times provided by the $c$ servers follow the MO-MVED with parameters $\mu_A$ as defined in Definition~\ref{def-MO-MVE}. 

The state space $S$ is defined as
\begin{equation}\label{eqn:state-space-MMDc}
    S = \{0\} \cup \{ s = (x_{i_j})_j \subseteq \{1,2, \ldots, c\}: 1 \leq |s| \leq (c-1) \} \cup \{c, c+1, \ldots\},
\end{equation}
where $0$ stands for empty; state $s$ indicates the busy servers; and $n \geq c$ implies that all servers are busy and the total number of customers (including $c$ in service) in the system.

\begin{example}
When $c=3$,
\[
    S = \{ 0, (1), (2), (3), (1,2), (1,3), (2,3), 3, 4, 5, \ldots \},
\]
where we used $(i)$ for indicating that server $i$ is busy, and $(i,j)$ for $s = \{i,j\}$ indicating that servers $i$ and $j$ are busy.
\end{example}

Without loss of generality and for convenience of comparisons to literature results on the $M/M/c$ model with heterogeneous servers, we assume that an arrival is always routed to the idle server with the smallest index, or otherwise joins the queue waiting for service according to FCFS queueing discipline.
Let $Y(t)$ be the state of the $M/M_D/c$ queueing system at time $t$.

\subsection{Marshall--Olkin shock system and sample-path construction}

To describe the service mechanism rigorously, we first introduce the
Marshall--Olkin shock system that generates the dependent service times (which defines MO-MVED).

Fix $c\ge2$ and let
\[
\mathcal P
=
2^{\{1,\ldots,c\}}\setminus\{\emptyset\}
\]
be the collection of all nonempty subsets of $\{1,\ldots,c\}$.

For each $A\in\mathcal P$, let
\[
N_A(t),\qquad t\ge0,
\]
be an independent Poisson process with rate $\mu_A$.
A jump of $N_A$ is called a \emph{shock of type $A$} and affects
simultaneously all servers whose indices belong to $A$.

For server $i\in\{1,\ldots,c\}$, define
\[
T_i
=
\inf\left\{
t>0:
\sum_{i \in A}N_A(t)\ge1
\right\}.
\]
Thus $T_i$ is the first time that a shock involving server $i$ occurs.
The random vector
\[
(T_1,\ldots,T_c)
\]
is said to follow a Marshall--Olkin multivariate exponential
distribution with parameters $\{\mu_A:A\in\mathcal P\}$.
The dependence among service times arises through the common shock
processes that simultaneously affect multiple servers.

\medskip

Let $N_\lambda(t)$ (for $t\ge0$) be an independent Poisson process with rate $\lambda$ representing
customer arrivals.

Using the arrival process $N_\lambda$ together with the shock processes
$\{N_A:A\in\mathcal P\}$, we now construct the queueing process
$\{Y(t):t\ge0\}$.

\medskip

\textbf{Queueing process.}
We define $\{Y(t):t\ge0\}$ with state space $S$ in
\eqref{eqn:state-space-MMDc} as follows:
\begin{itemize}
\item $Y(t)=0$ means that the system is empty;
\item $Y(t)=s\subseteq\{1,\ldots,c\}$ with
      $1\le |s|\le c-1$ means that exactly the servers in $s$ are busy;
\item $Y(t)=n\ge c$ means that all $c$ servers are busy and there are
      $n-c$ customers waiting in queue.
\end{itemize}

When $Y(t)\le c-1$, the state uniquely determines the set of busy
servers. When $Y(t)\ge c$, all servers are busy and only the total
number of customers in the system is recorded.

\medskip

\textbf{Arrivals.}
At each jump time of $N_\lambda$, one customer arrives.
If the system is not full in service, the arriving customer is assigned
to the idle server with the smallest index.
Otherwise, the customer joins the waiting queue according to the FCFS
discipline.

\medskip

\textbf{Shock events (service completions).}
At a jump time of $N_A$, let $s$ denote the current set of busy servers.
If $Y(t^-)\ge c$, then
\[
s=\{1,\ldots,c\}.
\]

The shock affects precisely the servers in
\[
A\cap s.
\]

All customers currently receiving service at these servers depart
simultaneously.
Consequently, the number of customers in the system decreases by
$|A\cap s|$, and the corresponding servers become idle.

\medskip

\textbf{Service initiation.}
Immediately after any departures, waiting customers (if any) are moved
into service according to the FCFS discipline and the routing rule.
Hence all available servers become busy whenever the queue is nonempty.

\begin{remark}
The above construction is the queueing analogue of the classical
Marshall--Olkin shock model.
Rather than assigning independent service clocks to individual servers,
the system is driven directly by the family of shock processes
$\{N_A:A\in\mathcal P\}$.
The resulting service-time vector at any collection of busy servers
follows the corresponding marginal Marshall--Olkin multivariate
exponential distribution.
\end{remark}

\begin{remark}
This construction generalizes the shock replacement description for the $M/M_D/2$ model. It avoids explicit restarting of clocks and provides a unified and transparent framework for analyzing the system.
\end{remark}

\begin{remark}
Let $X(t)$ denote the total number of customers in the system at time $t$. Then $X(t)$ is a function of $Y(t)$ given by
\[
X(t)=
\begin{cases}
|Y(t)|, & Y(t)\le c-1,\\
Y(t), & Y(t)\ge c.
\end{cases}
\]
In general, $X(t)$ is not Markov, because transition rates depend on the specific set of busy servers when $Y(t)\le c-1$. Later, in the homogeneous case, the process $Y(t)$ becomes lumpable with respect to $X(t)$, and $X(t)$ is then a continuous-time Markov chain.
\end{remark}

\begin{theorem}
The queueing process $Y(t)$ of the $M/M_D/c$ queueing system is a continuous-time Markov chain.
\end{theorem}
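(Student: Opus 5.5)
Because the sample-path construction drives $Y(t)$ entirely by the independent Poisson processes $N_\lambda$ and $\{N_A: A\in\mathcal P\}$, we do not need the lack-of-memory property of individual service clocks. It is enough to show that $Y$ is a deterministic, state-dependent functional of a multivariate Poisson point process. The plan is to view the superposition $N(t)=N_\lambda(t)+\sum_{A\in\mathcal P}N_A(t)$ as a Poisson process of total rate $\Lambda=\lambda+\sum_A\mu_A<\infty$. Each jump carries an independent mark in $\{\lambda\}\cup\mathcal P$, chosen with probabilities $\lambda/\Lambda$ and $\mu_A/\Lambda$. By the construction, $Y$ changes only at jump times of $N$, and the post-jump state is a deterministic function $\Phi(y,m)$ of the pre-jump state $y$ and the mark $m$. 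This holds for both cases. For an arrival, the customer goes to the smallest-index idle server or increments $n$. For a type-$A$ shock with busy set $s$, the customers at $A\cap s$ depart and then FCFS refilling takes place; note that $s$ is itself determined by $y$, with $s=\{1,\dots,c\}$ when $y\ge c$. The key structural point is therefore that $\Phi$ depends only on $y$: no information about elapsed service times is needed, since completions are triggered by shocks and not by individual clocks.

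The first step is to define $\Phi:S\times(\{\lambda\}\cup\mathcal P)\to S$ explicitly and check that it is well defined on each part of $S$. The cases are $y=0$, a proper busy set $s$, $y=c$, and $y=n>c$. In the last case, $|A|$ customers depart, $\min(|A|,n-c)$ waiting customers enter service, and the new state is $n-|A|$ if $n-|A|\ge c$. Otherwise the new state is the busy set obtained after refilling by the routing rule. Then $Y$ equals $Z_{N(t)}$, where $Z_{k+1}=\Phi(Z_k,M_{k+1})$ is a discrete-time Markov chain driven by i.i.d.\ marks and is independent of the Poisson clock $N$. A Markov chain subordinated to an independent Poisson process (uniformization) is a continuous-time Markov chain. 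Its generator is $q(y,y')=\Lambda\,P(\Phi(y,M)=y')$ for $y'\ne y$, that is, $\lambda$ for the arrival transition plus $\sum_{A:\Phi(y,A)=y'}\mu_A$. Shocks with $A\cap s=\emptyset$ are fictitious self-transitions.

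The second step is to verify the Markov property directly, in the form $P(Y(t+h)\in\cdot\mid\mathcal F_t)=P_h(Y(t),\cdot)$. Here we use the independent and stationary increments of the Poisson processes: the future marks and interarrival times after $t$ are independent of $\mathcal F_t$, and $Y(t+\cdot)$ is built from $Y(t)$ and these increments by the same map. We also check non-explosion, which follows because $N$ has finitely many jumps in bounded intervals. The rates are uniformly bounded by $\Lambda$, so the chain is regular.

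The main obstacle is reconciling this construction with the intended model. We must argue that each customer's service time under the shock construction has the correct distribution, and that the joint residual service times of the customers in service at any moment follow the marginal MO law from Lemma~\ref{lem:closure} on the busy set. This holds regardless of when each service started. To justify it, I would invoke the memorylessness of Poisson processes at stopping times (strong Markov property). The residual shock processes after any service start are fresh, so the residual times at the busy set $s$ are first-hit times of $\{N_A\}$ restricted to $A\cap s$, whose rates are given by \eqref{eqn:marginal-rate}. Lemma~\ref{rem:part-of-lack-of-memory-2} then confirms consistency with the MO-MVED service assumption.
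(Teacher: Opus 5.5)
Your proof is correct, and its core is the paper's argument. The paper takes the filtration generated by $N_\lambda$ and $\{N_A\}$, notes that the future of $Y$ after $t$ is determined by $Y(t)$ and the future increments, which are independent of $\mathcal F_t$, and concludes the Markov property. It then asserts exponential holding times because all transitions occur at Poisson jump times. This is exactly your second step.

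You differ in making the mechanism explicit. You define the update map $\Phi(y,m)$ case by case. In particular, for $y\ge c$ the affected set is all of $A$, and the refilling of the freed servers is deterministic under smallest-index routing. You then realize $Y(t)=Z_{N(t)}$ as a discrete-time chain driven by i.i.d.\ marks and subordinated to an independent rate-$\Lambda$ Poisson clock.

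This buys a rigorous justification of the two points the paper's proof only asserts. The first is that the future depends on the past only through $Y(t)$, because the busy set is recoverable from the state. The second is that holding times are exponential: in a state with busy set $s$ the rate is $\lambda+\sum_{A:\,A\cap s\neq\emptyset}\mu_A$, and shocks with $A\cap s=\emptyset$ are fictitious self-loops. You also get the explicit generator and non-explosion for free. The paper's version is shorter but leaves these points at the level of assertion.

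Your last paragraph on consistency with the marginal Marshall--Olkin law is not needed for this theorem, because the model is defined by the shock construction itself. The paper records that fact only in a remark. Your strong-Markov argument for it, taking residual times as first-hit times of the fresh shock increments restricted to the busy set, is sound.
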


\proof
Let
\[
\mathcal F_t
=
\sigma\!\left(
N_\lambda(s),\,N_A(s):
0\le s\le t,\;
A\in\mathcal P
\right).
\]

Since $N_\lambda$ and $\{N_A:A\in\mathcal P\}$ are Poisson processes,
their future increments
\[
N_\lambda(t+u)-N_\lambda(t),
\qquad
N_A(t+u)-N_A(t),
\qquad u\ge0,
\]
are independent of $\mathcal F_t$.

Moreover, the future evolution of the queueing process after time $t$
is completely determined by
\begin{itemize}
\item the current state $Y(t)$,
\item the future increments of the arrival process $N_\lambda$, and
\item the future increments of the shock processes $\{N_A\}$.
\end{itemize}

Therefore the conditional distribution of
$\{Y(t+u):u\ge0\}$ given $\mathcal F_t$
depends on the past only through the current state $Y(t)$.
Hence, for every bounded measurable function $f$,
\[
\mathbb E\!\left[
f(Y(t+u))
\mid
\mathcal F_t
\right]
=
\mathbb E\!\left[
f(Y(t+u))
\mid
Y(t)
\right].
\]

Thus $\{Y(t):t\ge0\}$ is a Markov process.
Since all transitions occur at jump times of Poisson processes,
the holding time in every state is exponential.
Consequently, $\{Y(t):t\ge0\}$ is a continuous-time Markov chain.
\pend

From the intuitive interpretation, the $M/M_D/c$ queueing system is stable iff that the arrival rate is smaller than the sum of the all service rates of the marginal distribution of server $i$.
\begin{lemma} \label{lem:stability}
The $M/M_D/c$ queueing system is stable iff
\[
    \lambda < \sum_{i=1}^{c} \mu^{\{i\}}_i.
\]
\end{lemma}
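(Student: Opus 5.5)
The plan is to treat $Y(t)$ as a level-dependent quasi-birth-and-death type chain and apply standard drift or regenerative criteria on the levels $n\ge c$. The key observation is that once $Y(t)=n\ge c+c$, i.e.\ at least $c$ customers are waiting, every shock of type $A$ removes exactly $|A|$ customers, because $s=\{1,\ldots,c\}$ and $A\cap s=A$. The removed customers are immediately replaced from the queue, so the state stays at level $\ge c$. Hence above level $2c$ the chain is a random walk on $\{2c,2c+1,\ldots\}$. Its upward rate is $\lambda$ and its downward jumps of size $k$ occur at rate $\sum_{|A|=k}\mu_A$. The mean drift there is
\[
\lambda-\sum_{A\in\mathcal P}|A|\,\mu_A .
\]
By \eqref{eqn:marginal-rate} with $T=\{i\}$, we have $\mu^{\{i\}}_i=\sum_{A\ni i}\mu_A$. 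Therefore
\[
\sum_{i=1}^c\mu^{\{i\}}_i=\sum_{i}\sum_{A\ni i}\mu_A=\sum_{A}|A|\mu_A ,
\]
which identifies the drift condition with the stated inequality.

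\emph{Sufficiency.} I will use Foster's criterion with the Lyapunov function $V(y)=X(y)$, the number in system. For $n\ge 2c$, the generator gives
\[
\mathcal{L}V(n)=\lambda-\sum_A|A|\mu_A<0 .
\]
The set $\{y: X(y)<2c\}$ is finite. The chain is irreducible on $S$, or at least on the class reachable from $0$, assuming the $\mu^{\{i\}}_i>0$; here one checks that $0$ is reachable from every state via shocks. The chain is also non-explosive, since its total jump rates are bounded by $\lambda+\sum_A\mu_A$. Positive recurrence then follows.

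\emph{Necessity.} If $\lambda\ge\sum_A|A|\mu_A$, I will compare the process with the random walk above. In the region $n\ge 2c$ the increments coincide with those of a compound walk with nonnegative mean drift. The walk with zero drift is null recurrent, and with positive drift it is transient. Hence the chain cannot be positive recurrent. One way to do this is to invoke the converse drift criteria of Tweedie or Fayolle--Malyshev--Menshikov, which apply because the jumps are bounded by $c$. Alternatively, for $\lambda\ge\sum|A|\mu_A$ I would use a direct argument: in stationarity, the long-run departure rate is at most $\sum_A|A|\mu_A$ times the fraction of time all servers are busy, and it must equal $\lambda$. This forces the busy fraction to be $\ge1$, so with probability one all servers are busy, which contradicts $\pi(0)>0$.

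The rate-conservation argument gives a clean necessity proof, but it needs care. I must justify that the departure rate equals $\lambda$ under stationarity, using finite mean $X$ or a truncation argument. I must also handle that departures from partially busy states are strictly smaller, with rate $\sum_A|A\cap s|\mu_A<\sum_A|A|\mu_A$ when $s\ne\{1,\ldots,c\}$ and all $\mu^{\{i\}}_i>0$. I expect this boundary bookkeeping, together with irreducibility when some $\mu_A=0$, to be the main obstacle.
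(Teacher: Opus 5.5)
Your proof is correct, and its core idea is the paper's: the paper's entire proof is the one-line assertion that $\lambda$ must be below the maximal departure rate when all servers are busy. That is exactly your identity $\sum_{i}\mu^{\{i\}}_i=\sum_{A}|A|\mu_A$, read as the downward drift of $X$ at full-busy levels. The difference is that you actually prove it: Foster's criterion with $V=X$ for sufficiency, with bounded rates giving non-explosion, and either a bounded-jump non-ergodicity criterion or rate conservation for necessity.

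The rigorous route the paper points to in its following remark is different and constructive. A root $r\in(0,1)$ of the characteristic equation exists iff $\lambda<\Theta$, and the boundary system then has a positive solution. This gives an invariant probability, so an irreducible non-explosive chain is positive recurrent. That route yields the stationary distribution itself. For necessity, however, it still needs an argument that no root of the characteristic polynomial, real or complex, lies in the open unit disk when $\lambda\ge\Theta$. The paper analyses only real roots, and without this a non-geometric summable tail is not excluded. Your drift argument avoids this, does not depend on the routing rule or on homogeneity, and handles $\lambda=\Theta$ with no extra work, but it gives no information about $\pi$.

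One sentence in your necessity argument should be fixed. The long-run departure rate is not bounded by $\sum_A|A|\mu_A$ times the fraction of time all servers are busy, because partially busy states also produce departures. You also do not need a strict inequality at every partial state. Set $d(y)=\sum_A|A\cap s(y)|\mu_A$. Then $d(y)\le\sum_A|A|\mu_A$ for all $y$ and $d(0)=0$, so
$\lambda=\sum_y\pi(y)d(y)\le(1-\pi_0)\sum_A|A|\mu_A$. This contradicts $\lambda\ge\sum_A|A|\mu_A$ once $\pi_0>0$.

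The identity $\lambda=\sum_y\pi(y)d(y)$ needs no moment assumption. Sum the cut equations between the finite sets $\{X\le k-1\}$ and their complements over $k\ge 1$. A type-$A$ shock from $y$ crosses exactly $|A\cap s(y)|$ cuts, and Tonelli justifies the interchange.

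Finally, the drift of $X$ already equals $\lambda-\sum_A|A|\mu_A$ at every level $n\ge c$. So the exceptional set in Foster's criterion can be taken to be the states with $X\le c-1$.
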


\proof This condition can be proved according to that the arrival rate ($\lambda$) should be smaller than the maximum average departure rate when all servers are busy. $\mu^{\{i\}}_i$ consists of contributions to the total service completion (or departure) rate from server $i$ due to single departure and also multiple departures.
\pend

\begin{remark}
A more detailed proof can be obtained by the following two facts (see later sections): (1) $\lambda < \sum_{i=1}^{c} \mu^{\{i\}}_i$ is an iff condition to have a geometric tail; and (2) with geometric tail, the system of boundary equations has a unique (up to difference of a constant) positive solution.
\end{remark}

\begin{example} \label{exa:stability-3}
When $c=3$, the stability condition is given by
\[
    \lambda < \mu^{\{1\}}_1 + \mu^{\{2\}}_2 + \mu^{\{3\}}_3,
\]
where
\begin{align*}
  \mu^{\{1\}}_1 = & \mu_1 + \mu_{12} + \mu_{13} + \mu_{123}, \\
  \mu^{\{2\}}_2 = & \mu_2 + \mu_{12} + \mu_{23} + \mu_{123}, \\
  \mu^{\{3\}}_3 = & \mu_3 + \mu_{13} + \mu_{23} + \mu_{123}.
\end{align*}
When $\mu_i = \mu$ for $i=1,2,3$, the condition is simplified as
\[
    \lambda < 3 \mu + 6 \mu_{12} + 3 \mu_{123},
\]
where the simultaneous service completion rates depending only on the cardinality of the involved server set are also symmetric, i.e. $\mu_{12} =\mu_{13}= \mu_{23}$.
\end{example}

\section{Analysis of $M/M_D/c$ model with homogenous servers}

In this section, we carry out analysis of the Markov chain of the $M/M_D/c$ model with homogeneous servers.

The $M/M_D/c$ model is called $M/M_D/c$ with homogeneous servers, or symmetric $M/M_D/c$ model, if the service rates satisfy the following property: for any given $1 \leq k \leq c$, $\mu_{i_1, \ldots, i_k}$ is independent of the choice of the subset $\{i_1, i_2, \ldots, i_k\} \subseteq \{1, 2, \ldots, c\}$ (only dependent on $k$). In this case, we denote
\[
    \mu_{(1)} := \mu_i, \quad \mu_{(2)} := \mu_{i,j}, \quad \mu_{(c-1)} := \mu_{i_1, \ldots, i_{c-1}}, \quad \mu_{(c)} := \mu_{1,2, \ldots, c}.
\]
It means that $\mu_{(k)}$ is the rate parameter associated with the $k$ simultaneous service completions for any specific set $A$ with $|A|=k$. For example, when $c=3$, $\mu_{(2)} = \mu_{1,2}=\mu_{1,3}=\mu_{2,3}$. Similarly, we can introduce notation $\mu^{(\ell)}_{(k)}:= \mu^T_{(k)}$ for any $T \subset \{1, 2, \ldots, c\}$ and nonempty $B \subset T$ with $|T|=\ell$ and $|B|=k$.

\subsection{Analysis of $M/M_D/3$ model with homogeneous servers}

In this section, we consider a special case of the symmetric $M/M_D/c$ model with $c=3$. We first state a corollary.

\begin{corollary} \label{cor:symmetric-3}
For $c=3$, we have,
\begin{align}\label{eqn:symmetric-3-rates}
   \mu^{(1)}_{(1)} & = \mu_{(1)} + 2  \mu_{(2)} + \mu_{(3)} = \mu_i + 2 \mu_{ij} + \mu_{123}, \\
   \mu^{(2)}_{(1)} & = \mu_{(1)} + \mu_{(2)} = \mu_i + \mu_{ij}, \\
   \mu^{(2)}_{(2)} & = \mu_{(2)} + \mu_{(3)} = \mu_{ij} + \mu_{123}, \\
   \mu^{(3)}_{(1)} & = \mu_{(1)} = \mu_i, \\
   \mu^{(3)}_{(2)} & = \mu_{(2)} = \mu_{ij}, \\
   \mu^{(3)}_{(3)} & = \mu_{(3)} = \mu_{123}.
\end{align}
For convenience, for $\ell >3$, we define $\mu^{(\ell)}_{(k)} = \mu^{(3)}_{(k)}$ with $k =1, 2, 3$.
\end{corollary}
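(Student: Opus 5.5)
The plan is to read each identity in Corollary~\ref{cor:symmetric-3} directly off formula \eqref{eqn:marginal-rate} of Lemma~\ref{lem:closure}, namely $\mu_B^{T}=\sum_{A:\,A\cap T=B}\mu_A$. For each pair $(\ell,k)$ I fix a representative $T\subseteq\{1,2,3\}$ with $|T|=\ell$ and $B\subseteq T$ with $|B|=k$. I then list the subsets $A\in\mathcal P$ satisfying $A\cap T=B$ and replace each $\mu_A$ by $\mu_{(|A|)}$ using the homogeneity assumption.

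The key combinatorial observation is as follows. The sets $A$ with $A\cap T=B$ are exactly $A=B\cup C$ with $C\subseteq\{1,2,3\}\setminus T$ arbitrary. With $c=3$, the complement has $3-\ell$ elements, so there are $\binom{3-\ell}{j}$ such sets with $|A|=k+j$. This gives the general formula
\[
\mu^{(\ell)}_{(k)}=\sum_{j=0}^{3-\ell}\binom{3-\ell}{j}\mu_{(k+j)} .
\]
It also shows the right-hand side depends only on $(\ell,k)$ and not on the choice of $T,B$, which justifies the notation $\mu^{(\ell)}_{(k)}$ introduced before the corollary.

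Next I specialize the formula case by case.
\begin{itemize}
\item $\ell=1$, $k=1$: the complement has two elements, giving $\mu_{(1)}+2\mu_{(2)}+\mu_{(3)}$. Concretely, for $T=B=\{i\}$ the admissible sets are $\{i\}$, $\{i,j\}$, $\{i,j'\}$ and $\{1,2,3\}$.
\item $\ell=2$: the complement has one element, giving $\mu_{(1)}+\mu_{(2)}$ for $k=1$ and $\mu_{(2)}+\mu_{(3)}$ for $k=2$.
\item $\ell=3$: the complement is empty, so $\mu^{(3)}_{(k)}=\mu_{(k)}$.
\end{itemize}
The $\ell=2$ cases match the three-dimensional example already worked out after Lemma~\ref{lem:closure}, where $\mu_1^{\{13\}}=\mu_1+\mu_{1,2}$ and $\mu_{1,3}^{\{13\}}=\mu_{1,3}+\mu_{1,2,3}$.

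The convention $\mu^{(\ell)}_{(k)}=\mu^{(3)}_{(k)}$ for $\ell>3$ is a definition, not a claim, so it needs no proof; it only says that when more than $c$ customers are present all three servers are busy.

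There is no real obstacle here. The only point needing care is the well-definedness step: under homogeneity, the value of $\mu_B^T$ must be independent of the representatives $T$ and $B$. The bijection $C\mapsto B\cup C$ settles this, because it shows the multiset of cardinalities $|A|$ depends only on $|T|$ and $|B|$.
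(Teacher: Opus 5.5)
Your proposal is correct and follows the same route as the paper, which simply says the identities follow directly from \eqref{eqn:marginal-rate} in Lemma~\ref{lem:closure}; your bijection $C\mapsto B\cup C$ spells out the counting, and the independence from the representatives $T,B$, that the paper leaves implicit. Your general formula $\mu^{(\ell)}_{(k)}=\sum_{j=0}^{3-\ell}\binom{3-\ell}{j}\mu_{(k+j)}$ is exactly the $c=3$ instance of the inner sum in the paper's later definition of $R_{m,\ell}$ in \eqref{eq:Rml}.
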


\proof It follows directly from Lemma~\ref{lem:closure}. 
\pend

\begin{remark}
It should be noted that $\mu^{(\ell)}_{(k)}$ is the service rate with $k$ simultaneous completions for any specific $B$ with $|B|=k$ in the marginal distribution for any set of variables $\{X_{i_1}, X_{i_2}, \ldots, X_{i_\ell}\}$, which is not the total rate over all possible sets $B$ with $|B| = k$.
\end{remark}

For the symmetric $M/M_D/3$ model, all boundary states with a same number of busy servers can be collapsed into a single state represent the number of the busy servers, which is the same as the number of customers in the system. So, the state space can be defined by
\[
    S = \{0, 1, 2, 3,  \dots\}.
\]


We now set up all balance equations for the model based on analysis of inflow to and outflow from state $n$.
For correctly analyzing the total outflow rate from state $n$, besides the arrival rate, we need the total service rate from the marginal distribution of $n$ variables. Note that when $n \geq 3$ the marginal distribution is simply the MO-3VE distribution. For the inflow rate, we need to include the rate from all other state $m \neq n$ going into state $n$, which include rate $\lambda$ from state $m=n-1$ to $n$ (for $n \geq 1$), and states from state $m=n+k$ with $k=1, 2, 3$ to $n$ (based on the marginal distribution of $m$ variables), which implies $k$ departures simultaneously. For clarity, we explicitly indicate the involved servers in the subscripts of the service rates throughout the following derivation for the symmetric $M/M_D/3$ model.

\noindent\textbf{State $n=0$:}

\noindent\textbf{Outflow:} $\lambda \pi_0$; \\
\textbf{Inflow (marginal rates):} $\mu^{(1)}_i \pi_1 + \mu^{(2)}_{i,j} \pi_2 + \mu^{(3)}_{1,2,3} \pi_3$, \\
\textbf{Inflow (original rates):} $(\mu_i + 2\mu_{i,j} + \mu_{1,2,3}) \pi_1 + (\mu_{i,j} + \mu_{1,2,3}) \pi_2 + \mu_{1,2,3} \pi_3$; \\
\[
    0 = -\lambda \pi_0 + \mu^{(1)}_i \pi_1 + \mu^{(2)}_{i,j} \pi_2 + \mu^{(3)}_{1,2,3} \pi_3,
\]
\[
    0 = -\lambda \pi_0 + (\mu_i + 2\mu_{i,j} + \mu_{1,2,3}) \pi_1 + (\mu_{i,j} + \mu_{1,2,3}) \pi_2 + \mu_{1,2,3} \pi_3.
\]

\noindent\textbf{State $n=1$:}

\noindent\textbf{Outflow (marginal rates):} $(\lambda + \mu^{(1)}_i) \pi_1$, \\
\textbf{Outflow (original rates):}  $[\lambda + (\mu_i + 2\mu_{i,j} + \mu_{1,2,3})] \pi_1$; \\
\textbf{Inflow (marginal rates):} $\lambda \pi_0 + 2 \mu^{(2)}_i \pi_2 + 3 \mu^{(3)}_{i,j} \pi_3 + \mu^{(4)}_{1,2,3} \pi_4$, \\
\textbf{Inflow (original rates):}  $\lambda \pi_0 + 2 (\mu_{i} + \mu_{i,j}) \pi_2 + 3 \mu_{i,j} \pi_3 + \mu_{1,2,3} \pi_4$; \\
\[
    0 = - (\lambda + \mu^{(1)}_i) \pi_1 + \lambda \pi_0 + 2 \mu^{(2)}_i \pi_2 + 3 \mu^{(3)}_{i,j} \pi_3 + \mu^{(4)}_{1,2,3} \pi_4,
\]
\[
    0 = - [\lambda + (\mu_i + 2\mu_{i,j} + \mu_{1,2,3})] \pi_1 + \lambda \pi_0 + 2 (\mu_{i} + \mu_{i,j}) \pi_2 + 3 \mu_{i,j} \pi_3 + \mu_{1,2,3} \pi_4.
\]

\noindent\textbf{State $n=2$:}

\noindent\textbf{Outflow (marginal rates):} $(\lambda + \mu^{(2)}_i + \mu^{(2)}_j + \mu^{(2)}_{i,j}) \pi_2$, \\
\textbf{Outflow (original rates):} $[(\lambda + 2(\mu_i + \mu_{i,j}) + (\mu_{i,j}+\mu_{1,2,3})] \pi_2$; \\
\textbf{Inflow (marginal rates):} $\lambda \pi_1 + 3 \mu^{(3)}_i \pi_3 + 3 \mu^{(4)}_{i,j} \pi_4 + \mu^{(5)}_{1,2,3} \pi_5$, \\
\textbf{Inflow (original rates):} $\lambda \pi_1 + 3 \mu_i \pi_3 + 3 \mu_{i,j} \pi_4 + \mu_{1,2,3} \pi_5$; \\
\[
    0 = - (\lambda + \mu^{(2)}_i + \mu^{(2)}_j + \mu^{(2)}_{i,j}) \pi_2 + \lambda \pi_1 + 3 \mu^{(3)}_i \pi_3 + 3 \mu^{(4)}_{i,j} \pi_4 + \mu^{(5)}_{1,2,3} \pi_5,
\]
\[
    0 = - [(\lambda + 2(\mu_i + \mu_{i,j}) + (\mu_{i,j}+\mu_{1,2,3})] \pi_2 + \lambda \pi_1 + 3 \mu_i \pi_3 + 3 \mu_{i,j} \pi_4 + \mu_{1,2,3} \pi_5.
\]

\noindent\textbf{State $n=3$:}

\noindent\textbf{Outflow (marginal rates):} $(\lambda + 3\mu^{(3)}_i + 3\mu^{(3)}_{i,j} + \mu^{(3)}_{1,2,3}) \pi_3$, \\
\textbf{Outflow (original rates):} $(\lambda + 3\mu_i + 3\mu_{i,j} + \mu_{1,2,3}) \pi_3$; \\
\textbf{Inflow (marginal rates):} $\lambda \pi_2 + 3\mu^{(4)}_i \pi_4 + 3\mu^{(5)}_{i,j} \pi_5 + \mu^{(6)}_{1,2,3} \pi_6$, \\
\textbf{Inflow (original rates):} $\lambda \pi_2 + 3\mu_i \pi_4 + 3\mu_{i,j} \pi_5 + \mu_{1,2,3} \pi_6$; \\
\[
    0 =  - (\lambda + 3\mu^{(3)}_i + 3\mu^{(3)}_{i,j} + \mu^{(3)}_{1,2,3}) \pi_3 + \lambda \pi_2 + 3\mu^{(4)}_i \pi_4 + 3\mu^{(5)}_{i,j} \pi_5 + \mu^{(6)}_{1,2,3} \pi_6,
\]
\[
    0 =  - (\lambda + 3\mu_i + 3\mu_{i,j} + \mu_{1,2,3}) \pi_3 + \lambda \pi_2 + 3\mu_i \pi_4 + 3\mu_{i,j} \pi_5 + \mu_{1,2,3} \pi_6.
\]

\noindent\textbf{State $n \geq 4$ (also valid for $n=3$):}

\noindent\textbf{Outflow (marginal rates):} $(\lambda + 3\mu^{(n)}_i + 3\mu^{(n)}_{i,j} + \mu^{(n)}_{1,2,3}) \pi_n$, \\
\textbf{Outflow (original rates):} $(\lambda + 3\mu_i + 3\mu_{i,j} + \mu_{1,2,3}) \pi_n$; \\
\textbf{Inflow (marginal rates):} $\lambda \pi_{n-1} + 3\mu^{(n+1)}_i \pi_{n+1} + 3\mu^{(n+2)}_{i,j} \pi_{n+2} + \mu^{(n+3)}_{1,2,3} \pi_{n+3}$, \\
\textbf{Inflow (original rates):} $\lambda \pi_{n-1} + 3\mu_i \pi_{n+1} + 3\mu_{i,j} \pi_{n+2} + \mu_{1,2,3} \pi_{n+3}$; \\
\[
0 =  -(\lambda + 3\mu^{(n)}_i + 3\mu^{(n)}_{i,j} + \mu^{(n)}_{1,2,3}) \pi_n + \lambda \pi_{n-1} + 3\mu^{(n+1)}_i \pi_{n+1} + 3\mu^{(n+2)}_{i,j} \pi_{n+2} + \mu^{(n+3)}_{1,2,3} \pi_{n+3} ,
\]
\[
    0 =  -(\lambda + 3\mu_i + 3\mu_{i,j} + \mu_{1,2,3}) \pi_n + \lambda \pi_{n-1} + 3\mu_i \pi_{n+1} + 3\mu_{i,j} \pi_{n+2} + \mu_{1,2,3} \pi_{n+3}.
\]

Based on the above balance equations, we can recover all transition rates $q_{i,j}$, or provide the transition rate matrix $Q$, given by:
\[
 Q = \bordermatrix{\text{state} & 0 & 1 & 2 & 3 & 4 & 5 & 6 & \cdots & \cdots & \cr
   0 & -\lambda & \lambda & \cr
   1 & \mu^{(1)}_i & - \theta_1 & \lambda & \cr
   2 & \mu^{(2)}_{i,j} & 2\mu^{(2)}_i & - \theta_2 & \lambda & \cr
   3 & \mu^{(3)}_{1,2,3} & 3\mu^{(3)}_{i,j} & 3\mu^{(3)}_i & - \theta & \ddots & \cr
   4 &  & \mu^{(3)}_{1,2,3} & 3\mu^{(3)}_{i,j} & 3\mu^{(3)}_i & \ddots & \ddots & \cr
   5 &  &  & \mu^{(3)}_{1,2,3} & 3\mu^{(3)}_{i,j} & \ddots & \ddots & \ddots & & \cr
   6 & &  &  & \mu^{(3)}_{1,2,3} & \ddots &\ddots &\ddots  & \ddots & \cr
   \vdots &   & & & & \ddots &\ddots &\ddots &\ddots  & \ddots & \cr
     \vdots &  & & & & &\ddots &\ddots &\ddots  & \ddots & \cr}.
\]
where all empty entries are zero, and
\begin{align*}
  \theta_1 & = (\lambda + \mu^{(1)}_i), \\
  \theta_2 & = (\lambda + \mu^{(2)}_i + \mu^{(2)}_j + \mu^{(2)}_{i,j}), \\
  \theta & = (\lambda + 3\mu^{(3)}_i + 3\mu^{(3)}_{i,j} + \mu^{(3)}_{1,2,3}).
\end{align*}

Based on the $Q$ matrix given above, it is direct to have the following stationary equations:
\begin{align}
\lambda \pi_0 &= \mu^{(1)}_i \pi_1 + \mu^{(2)}_{i,j} \pi_2 + \mu^{(3)}_{1,2,3} \pi_3,  \label{eqn:symmetric-3-0} \\
(\lambda + \mu^{(1)}_i) \pi_1 &= \lambda \pi_0 + 2 \mu^{(2)}_i \pi_2 + 3 \mu^{(3)}_{i,j} \pi_3 + \mu^{(3)}_{1,2,3} \pi_4, \label{eqn:symmetric-3-1} \\
(\lambda + \mu^{(2)}_i + \mu^{(2)}_j + \mu^{(2)}_{i,j}) \pi_2 &= \lambda \pi_1 + 3 \mu^{(3)}_i \pi_3 + 3 \mu^{(3)}_{i,j} \pi_4 + \mu^{(3)}_{1,2,3} \pi_5, \label{eqn:symmetric-3-2} \\
 (\lambda + 3\mu^{(3)}_i + 3\mu^{(3)}_{i,j} + \mu^{(3)}_{1,2,3}) \pi_n &= \lambda \pi_{n-1} + 3\mu^{(3)}_i \pi_{n+1} + 3\mu^{(3)}_{i,j} \pi_{n+2} + \mu^{(3)}_{1,2,3} \pi_{n+3}, \nonumber \\
 & \hspace*{76mm} \text{for $n \geq 3$.}\label{eqn:symmetric-3-n}
\end{align}
It follows from Corollary~\ref{cor:symmetric-3} that the above equations can be expressed in terms of original service rates.


\subsubsection{A special case: $M/M_D/3$ with homogeneous servers}

We show that $M/M_D/3$ system with homogeneous servers has a geometric solution.

\paragraph{Solution for non-boundary equation \eqref{eqn:symmetric-3-n}:} Assume that
\[
\pi_n = \pi_{c-1} r^{\,n-(c-1)}, \qquad n \ge c-1, \quad 0<r<1.
\]
Substituting the above assumption into \eqref{eqn:symmetric-3-n}, and canceling the common factor $\pi_{c-1}$, which is $\pi_2$ in the case of $c=3$, yields the following characteristic equation:
\begin{equation}\label{eq:mm-d3-quartic}
\mu^{(3)}_{1,2,3} r^{4} + 3\mu^{(3)}_{i,j} r^{3} + 3\mu^{(3)}_{i} r^{2}
- (\lambda + 3\mu^{(3)}_{i} + 3\mu^{(3)}_{i,j} + \mu^{(3)}_{1,2,3}) r + \lambda = 0.
\end{equation}
It can be verified that $r=1$ is always a root of \eqref{eq:mm-d3-quartic}. Factoring it out leaves the characteristic cubic
\begin{equation}\label{eq:mm-d3-characteristic}
g(r) \;:=\;
\mu^{(3)}_{1,2,3} r^{3} + (\mu^{(3)}_{1,2,3}+3\mu^{(3)}_{i,j}) r^{2}
+ (3\mu^{(3)}_{i}+3\mu^{(3)}_{i,j}+\mu^{(3)}_{1,2,3}) r - \lambda = 0.
\end{equation}

\paragraph{Existence and uniqueness of $r \in (0,1)$.}
Since
\[
g'(r)=3\mu^{(3)}_{1,2,3} r^{2}+2(\mu^{(3)}_{1,2,3}+3\mu^{(3)}_{i,j})r+(3\mu^{(3)}_{i}+3\mu^{(3)}_{i,j}+\mu^{(3)}_{1,2,3})>0,
\]
the function $g(r)$ is strictly increasing on $(0,\infty)$. Moreover,
\[
g(0)=-\lambda < 0, \qquad
g(1)=3\mu^{(3)}_{i}+6\mu^{(3)}_{i,j}+3\mu^{(3)}_{1,2,3}-\lambda.
\]
Hence, we have the following lemma.
\begin{lemma} \label{lem:stability-2}
There exists exactly one root $r\in(0,1)$ for the characterization equation \eqref{eq:mm-d3-quartic}  if and only if
\begin{equation}\label{eq:mm-d3-stability}
    \lambda \;<\; 3\mu^{(3)}_{i}\;+\;6\mu^{(3)}_{i,j}\;+\;3\mu^{(3)}_{1,2,3} \:.
\end{equation}
\end{lemma}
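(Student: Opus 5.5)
The plan is to reduce the quartic \eqref{eq:mm-d3-quartic} to the cubic $g$ in \eqref{eq:mm-d3-characteristic} and then use monotonicity of $g$. First I would check the factorization explicitly. Write the quartic as $p(r)=\mu^{(3)}_{1,2,3} r^{4} + 3\mu^{(3)}_{i,j} r^{3} + 3\mu^{(3)}_{i} r^{2} - (\lambda + 3\mu^{(3)}_{i} + 3\mu^{(3)}_{i,j} + \mu^{(3)}_{1,2,3}) r + \lambda$. Then $p(1)=0$ because the coefficients sum to zero. Expanding $(r-1)g(r)$, the coefficients are
\begin{itemize}
\item $r^4$: $\mu^{(3)}_{1,2,3}$;
\item $r^3$: $(\mu^{(3)}_{1,2,3}+3\mu^{(3)}_{i,j})-\mu^{(3)}_{1,2,3}=3\mu^{(3)}_{i,j}$;
\item $r^2$: $(3\mu^{(3)}_i+3\mu^{(3)}_{i,j}+\mu^{(3)}_{1,2,3})-(\mu^{(3)}_{1,2,3}+3\mu^{(3)}_{i,j})=3\mu^{(3)}_i$;
\item $r$: $-\lambda-(3\mu^{(3)}_i+3\mu^{(3)}_{i,j}+\mu^{(3)}_{1,2,3})$;
\item constant: $\lambda$.
\end{itemize}
So $p(r)=(r-1)g(r)$. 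Since $r-1\neq 0$ on $(0,1)$, the roots of $p$ in $(0,1)$ are exactly the roots of $g$ in $(0,1)$.

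Next I would use the facts recorded just before the lemma. The derivative $g'(r)>0$ on $[0,\infty)$, provided the linear coefficient $3\mu^{(3)}_i+3\mu^{(3)}_{i,j}+\mu^{(3)}_{1,2,3}$ is positive; I assume this nondegeneracy, as otherwise no service occurs. Also $g(0)=-\lambda<0$ (with $\lambda>0$) and $g(1)=3\mu^{(3)}_i+6\mu^{(3)}_{i,j}+3\mu^{(3)}_{1,2,3}-\lambda$.

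For sufficiency: if \eqref{eq:mm-d3-stability} holds, then $g(1)>0$. By the intermediate value theorem, $g$ has a root in $(0,1)$, and strict monotonicity makes it unique.

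For necessity: suppose $g(r_0)=0$ for some $r_0\in(0,1)$. Strict monotonicity gives $g(1)>g(r_0)=0$, which is exactly \eqref{eq:mm-d3-stability}. If instead $g(1)\le 0$, then $g<0$ on $[0,1)$, so there is no root in $(0,1)$.

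The only delicate point is bookkeeping: making sure the stated cubic really is the quotient, which the coefficient check above settles. I would also note, consistent with Lemma~\ref{lem:stability} and Example~\ref{exa:stability-3}, that the right-hand side equals $\sum_i \mu^{\{i\}}_i$ once the marginal rates from Corollary~\ref{cor:symmetric-3} are substituted.
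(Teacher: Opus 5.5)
Your proposal is correct and follows essentially the same route as the paper. Both factor $(r-1)$ out of the quartic to obtain the cubic $g$, then use $g'>0$, $g(0)=-\lambda<0$ and the sign of $g(1)=3\mu^{(3)}_{i}+6\mu^{(3)}_{i,j}+3\mu^{(3)}_{1,2,3}-\lambda$ together with the intermediate value theorem. Your explicit coefficient check of $p(r)=(r-1)g(r)$ and your separate treatment of the necessity direction simply fill in details the paper leaves as ``it can be verified''.
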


\begin{remark}
This condition is the same as the stability condition stated in Lemma~\ref{lem:stability}, or in Example~\ref{exa:stability-3}. If we want to have a proof of stability based on Lemma~\ref{lem:stability-2}, we need to check the consistency of the geometric solution of the non-boundary equation with equations \eqref{eqn:symmetric-3-1} and \eqref{eqn:symmetric-3-2}; or with the geometric solution $\pi_n = \pi_{2} r^{\,n-2}$ for $n \geq 2$ of \eqref{eqn:symmetric-3-n}, we can find positive solutions of $\pi_1$ and $\pi_2$ in terms of $\pi_0$ from equations \eqref{eqn:symmetric-3-1} and \eqref{eqn:symmetric-3-2} when the Markov chain is irreducible.
\end{remark}

\paragraph{Consistency at the boundary (\(n=1,2\)).} Assume that the queue length Markov chain is irreducible satisfying the condition in \eqref{eq:mm-d3-stability}.

Assume the geometric tail \(\pi_n=\pi_{2} r^{\,n-2}\) for \(n\ge 2\), with \(r\in(0,1)\) being the solution of  \eqref{eq:mm-d3-quartic}.
From \eqref{eqn:symmetric-3-1} and \eqref{eqn:symmetric-3-2} we can obtain two linear relations in \(\pi_1,\pi_2\).

First, from \eqref{eqn:symmetric-3-1} we get:
\[
(\lambda+\mu^{(1)}_{i})\,\pi_1
= \lambda\,\pi_0
+ \Bigl(2\mu^{(2)}_{i} + 3\mu^{(3)}_{i,j} r + \mu^{(3)}_{1,2,3} r^2\Bigr)\pi_2.
\]
Hence,
\begin{equation}\label{eq:pi1-from-eq1}
\pi_1 \;=\; \frac{\lambda\,\pi_0 + A(r)\,\pi_2}{\lambda+\mu^{(1)}_{i}},
\end{equation}
where
\begin{equation} \label{eqn:A(r)}
A(r):=2\mu^{(2)}_{i} + 3\mu^{(3)}_{i,j} r + \mu^{(3)}_{1,2,3} r^2 .
\end{equation}

Next, from \eqref{eqn:symmetric-3-2} we get:
\[
(\lambda+\mu^{(2)}_{i}+\mu^{(2)}_{j}+\mu^{(2)}_{i,j})\,\pi_2
= \lambda\,\pi_1
+ \Bigl(3\mu^{(3)}_{i} r + 3\mu^{(3)}_{i,j} r^2 + \mu^{(3)}_{1,2,3} r^3\Bigr)\pi_2.
\]
Equivalently,
\begin{equation}\label{eq:pi1-from-eq2}
\pi_1 \;=\; \frac{B(r)}{\lambda}\,\pi_2,
\end{equation}
where
\[
\quad
B(r):=\lambda+\mu^{(2)}_{i}+\mu^{(2)}_{j}+\mu^{(2)}_{i,j}
- 3\mu^{(3)}_{i} r - 3\mu^{(3)}_{i,j} r^2 - \mu^{(3)}_{1,2,3} r^3 .
\]

\paragraph{Simplification of $B(r)$:} From the characteristic equation \eqref{eq:mm-d3-characteristic}, we can write $\lambda$ as
\begin{equation}\label{eq:lambda-as-r}
    \lambda \;=\; r\Bigl[\,3\mu^{(3)}_{i} + 3\mu^{(3)}_{i,j}(1+r) + \mu^{(3)}_{1,2,3}(1+r+r^2)\Bigr].
\end{equation}
Insert \eqref{eq:lambda-as-r} into the definition of \(B(r)\) to have the following simplified expression:
\begin{align}
B(r)
&= r\!\left[3\mu^{(3)}_{i} + 3\mu^{(3)}_{i,j}(1+r) + \mu^{(3)}_{1,2,3}(1+r+r^2)\right]
 + \bigl(\mu^{(2)}_{i}+\mu^{(2)}_{j}+\mu^{(2)}_{i,j}\bigr) \nonumber \\
&\hspace{8em}
 - \left[3\mu^{(3)}_{i} r + 3\mu^{(3)}_{i,j} r^{2} + \mu^{(3)}_{1,2,3} r^{3}\right] \nonumber \\
&= \bigl(\mu^{(2)}_{i}+\mu^{(2)}_{j}+\mu^{(2)}_{i,j}\bigr)
  + 3\mu^{(3)}_{i,j}r + \mu^{(3)}_{1,2,3}r + \mu^{(3)}_{1,2,3}r^{2} \nonumber \\
&= \bigl(\mu^{(2)}_{i}+\mu^{(2)}_{j}+\mu^{(2)}_{i,j}\bigr)
           \;+\; r\!\left(3\mu^{(3)}_{i,j} + \mu^{(3)}_{1,2,3}(1+r)\right). \label{eqn:simplified B(r)}
\end{align}
It follows from the simplified expression for $B(r)$ and the definition of $A(r)$, we have
\[
    B(r) - A(r) = \mu^{(2)}_{i,j} + r \mu^{(3)}_{123} > 0.
\]

Now, equating \eqref{eq:pi1-from-eq1} and \eqref{eq:pi1-from-eq2} leads to
\begin{equation}\label{eq:pi2-from-pi0}
    \pi_2 \;=\; \frac{\lambda^2}{(\lambda+\mu^{(1)}_{i})\,B(r) - \lambda\,A(r)}\,\pi_0  =
    \frac{\lambda^2}{\mu^{(1)}_{i}B(r) + \lambda \big (\mu^{(2)}_{i,j} + r \mu^{(3)}_{123}\big )}\,\pi_0,
\end{equation}
and substituting \eqref{eq:pi2-from-pi0} back into \eqref{eq:pi1-from-eq2}) gives
\begin{equation}\label{eq:pi1-from-pi0}
    \pi_1 \;=\; \frac{\lambda\,B(r)}{\mu^{(1)}_i\,B(r)\;+\;\lambda\bigl(\mu^{(2)}_{i,j}+r\,\mu^{(3)}_{123}\bigr)}\,\pi_0.
\end{equation}
with \(B(r)\) given in \eqref{eqn:simplified B(r)}.

\begin{remark}
It is clear that for $\pi_0 >0$, all $\pi_n$ for $n \geq 1$ will be positive, which means that the geometric solution for non-boundary states is consistent with the boundary equations. The unique probability solution can be obtained by nomalization
\end{remark}

\paragraph{Normalization and closed-form for \(\pi_0\).}
Assume the geometric tail \(\pi_n=\pi_{2}\,r^{\,n-2}\) for \(n\ge 2\), where \(r\in(0,1)\) is
the unique root from \eqref{eqn:symmetric-3-n}. Based on the expressions for $\pi_1$ and $\pi_2$ given in \eqref{eq:pi1-from-pi0} and \eqref{eq:pi2-from-pi0}, simple calculations determines $\pi_0$ in the probability solution of $\pi$:
\[
    \pi_0
= \frac{\mu^{(1)}_i\,B(r)+\lambda\bigl(\mu^{(2)}_{i,j}+r\,\mu^{(3)}_{123}\bigr)}
{\mu^{(1)}_i\,B(r)+\lambda\bigl(\mu^{(2)}_{i,j}+r\,\mu^{(3)}_{123}\bigr)
+\lambda B(r)+\dfrac{\lambda^2}{1-r}},
\]
where $B(r)$ is given in \eqref{eqn:simplified B(r)}.


\subsection{Analysis of $M/M_D/c$ model with homogeneous servers}

We now extend the analysis of the homogeneous $M/M_D/3$ model to the
general homogeneous $M/M_D/c$ model.

\paragraph{Homogeneous service structure.}
Recall that in the homogeneous case,
\[
\mu_A=\mu_{(|A|)}, \qquad \emptyset\neq A\subseteq \{1,\ldots,c\}.
\]
Thus, all subsets of busy servers with the same cardinality are
statistically identical. By the lumpability discussion in the previous
section, the detailed queueing process $Y(t)$ can be reduced to the
queue-length process
\[
X(t)=
\begin{cases}
|Y(t)|, & Y(t)\le c-1,\\
Y(t), & Y(t)\ge c,
\end{cases}
\]
and $\{X(t)\}$ is a continuous-time Markov chain on
\[
S=\{0,1,2,\ldots\}.
\]

\paragraph{Effective completion rates.}
When $m$ servers are busy, the total rate of an $\ell$-fold completion
($1\le \ell\le m$) is
\begin{equation}\label{eq:Rml}
R_{m,\ell}
:=
\binom{m}{\ell}
\sum_{k=\ell}^{c-m+\ell}
\binom{c-m}{k-\ell}\mu_{(k)},
\qquad 1\le \ell\le m.
\end{equation}
Define also
\begin{equation}\label{eq:Sm}
S_m:=\sum_{\ell=1}^{m}R_{m,\ell}.
\end{equation}

\begin{remark}
For $m=c$, we have
\[
R_{c,\ell}=\binom{c}{\ell}\mu_{(\ell)},
\qquad
S_c=\sum_{\ell=1}^{c}\binom{c}{\ell}\mu_{(\ell)}.
\]
If $\mu_{(k)}=0$ for all $k\ge 2$, then
\[
R_{m,1}=m\mu_{(1)}, \qquad R_{m,\ell}=0 \ \ (\ell\ge 2),
\]
and the model reduces to the classical $M/M/c$ queue.
\end{remark}

\paragraph{Stationary equations.}
Let $\{\pi_n\}_{n\ge 0}$ be the stationary probabilities of $\{X(t)\}$.
Then the balance equations are:

\medskip
\noindent\textbf{State $n=0$:}
\begin{equation}\label{eq:mm-dc-0}
\lambda \pi_0
=
\sum_{m=1}^{c}R_{m,m}\pi_m.
\end{equation}

\medskip
\noindent\textbf{Boundary states $1\le n\le c-1$:}
\begin{equation}\label{eq:mm-dc-boundary}
(\lambda+S_n)\pi_n
=
\lambda \pi_{n-1}
+\sum_{\ell=1}^{c-n}R_{n+\ell,\ell}\pi_{n+\ell}
+\sum_{\ell=c-n+1}^{c}R_{c,\ell}\pi_{n+\ell}.
\end{equation}

\medskip
\noindent\textbf{Non-boundary states $n\ge c$:}
\begin{equation}\label{eq:mm-dc-nonboundary}
(\lambda+S_c)\pi_n
=
\lambda \pi_{n-1}
+\sum_{\ell=1}^{c}R_{c,\ell}\pi_{n+\ell}.
\end{equation}

\paragraph{Characteristic equation for the geometric tail.}
Assume
\[
\pi_n=\pi_{c-1}r^{\,n-(c-1)}, \qquad n\ge c-1,\quad 0<r<1.
\]
Substituting into \eqref{eq:mm-dc-nonboundary} yields
\begin{equation}\label{eq:char-eq}
\lambda
+\sum_{\ell=1}^{c}R_{c,\ell}r^{\ell+1}
=
r(\lambda+S_c).
\end{equation}
Equivalently,
\begin{equation} \label{eqn:ch-eqn-homo-c}
F(r):=\lambda+\sum_{\ell=1}^{c}R_{c,\ell}r^{\ell+1}
-r(\lambda+S_c)=0.
\end{equation}

\paragraph{Existence and uniqueness of the tail ratio.}
Since
\[
F(0)=\lambda>0,\qquad F(1)=0,
\]
and
\[
F''(r)=\sum_{\ell=1}^{c}(\ell+1)\ell\,R_{c,\ell}r^{\ell-1}\ge 0,
\]
the function $F$ is convex on $(0,\infty)$. Moreover,
\[
F'(1)=\sum_{\ell=1}^{c}\ell\,R_{c,\ell}-\lambda
=
\Theta_c-\lambda,
\]
where
\[
\Theta_c:=\sum_{\ell=1}^{c}\ell\binom{c}{\ell}\mu_{(\ell)}.
\]
Hence we have the following lemma. 
\begin{lemma}
The characteristic equation in \eqref{eqn:ch-eqn-homo-c} has a unique root $r\in(0,1)$ if and only if
\begin{equation}\label{eqn:stability}
\lambda<\Theta_c=\sum_{\ell=1}^{c}\ell\binom{c}{\ell}\mu_{(\ell)}.
\end{equation}
\end{lemma}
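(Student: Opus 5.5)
We argue directly from the facts just recorded: $F(0)=\lambda>0$, $F(1)=0$, $F$ is convex on $[0,\infty)$, and $F'(1)=\Theta_c-\lambda$. The first two are immediate. At $r=1$ we have $\sum_{\ell}R_{c,\ell}-S_c=0$ by \eqref{eq:Sm}, so $F(1)=\lambda+S_c-(\lambda+S_c)=0$. For the derivative, $F'(1)=\sum_\ell(\ell+1)R_{c,\ell}-\lambda-S_c=\sum_\ell \ell R_{c,\ell}-\lambda$, and $R_{c,\ell}=\binom{c}{\ell}\mu_{(\ell)}$ by the remark following \eqref{eq:Sm}. The plan is to show that a root in $(0,1)$ exists exactly when $F'(1)>0$, and that convexity forces it to be unique.

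\emph{Sufficiency.} Suppose $\lambda<\Theta_c$, so $F'(1)>0$. Then $F(r)<F(1)=0$ for $r<1$ close to $1$. Since $F(0)=\lambda>0$, the intermediate value theorem gives a root $r^*\in(0,1)$.

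For uniqueness, suppose $r_1<r_2$ are two roots in $(0,1)$. Then $F$ vanishes at the three points $r_1<r_2<1$. A convex function vanishing at three points vanishes on the whole interval $[r_1,1]$. That would give $F'(1^-)=0$, which contradicts $F'(1)>0$ because $F$ is a polynomial.

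\emph{Necessity.} Suppose $\lambda\ge\Theta_c$, so $F'(1)\le 0$. Convexity gives the tangent-line inequality $F(r)\ge F(1)+F'(1)(r-1)=F'(1)(r-1)\ge 0$ for $r\in(0,1)$. We must exclude equality at some $r^*\in(0,1)$. Equality would force $F$ to agree with an affine function on $[r^*,1]$, hence everywhere, since $F$ is a polynomial. Then $F''\equiv 0$, which means $R_{c,\ell}=0$ for all $\ell$, i.e.\ all $\mu_{(\ell)}=0$. In that case $F(r)=\lambda(1-r)$, which has no root in $(0,1)$ because $\lambda>0$. So no root in $(0,1)$ exists.

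The main obstacle is this degenerate case where $F$ is affine, together with making the strictness arguments in the convexity steps rigorous. I would handle strictness by noting that $F''>0$ on $(0,\infty)$ as soon as some $\mu_{(\ell)}>0$, so $F$ is then strictly convex. With strict convexity, both the uniqueness argument and the strict tangent-line inequality $F(r)>0$ follow at once. The remaining case, all $\mu_{(\ell)}=0$, is the one just checked explicitly.
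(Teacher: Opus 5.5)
Your proof is correct and follows essentially the same route as the paper: both rest on $F(0)=\lambda>0$, $F(1)=0$, the convexity of $F$, and $F'(1)=\Theta_c-\lambda$. The only difference is in the details. The paper's detailed version of this argument, written for the heterogeneous analogue, locates the unique zero $r_0$ of $F'$ and argues by monotonicity on each side of it, whereas you use the three-zero argument and the tangent-line inequality. You also handle strict convexity and the degenerate case where all $\mu_{(\ell)}=0$ explicitly, which the paper passes over.
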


\begin{remark}
The quantity $\Theta_c$ is the total mean service-completion rate when all
$c$ servers are busy. Thus \eqref{eqn:stability} is the natural stability
condition for the homogeneous $M/M_D/c$ model. Therefore, this condition is the same as the stability condition given in Lemma~\ref{lem:stability}. 
\end{remark}

\subsubsection*{Solution of the model}

Assume that the irreducible queue length Markov chain is stable.
We now show that the geometric tail is consistent with the boundary
equations and yields the full stationary distribution.

\paragraph{Tail inflow polynomial and shifted tail terms.}
Define
\[
\Phi_c(r):=\sum_{\ell=1}^{c}R_{c,\ell}r^\ell,
\]
and, for $1\le n\le c-2$,
\begin{equation}\label{eq:TailShiftTn}
\mathcal T_n(r):=
\sum_{\ell=c-n}^{c}R_{c,\ell}\,r^{\,n+\ell-(c-1)}.
\end{equation}
For $n=c-1$, note that
\[
\mathcal T_{c-1}(r)=\Phi_c(r).
\]

\paragraph{Boundary equations with the tail substituted.}
For $n=c-1$, \eqref{eq:mm-dc-boundary} gives
\[
(\lambda+S_{c-1})\pi_{c-1}
=
\lambda\pi_{c-2}
+
R_{c,1}\pi_c
+\sum_{\ell=2}^{c}R_{c,\ell}\pi_{c-1+\ell}.
\]
Using the geometric tail, this becomes
\begin{equation}\label{eq:top-link}
\lambda\pi_{c-2}
=
\bigl(\lambda+S_{c-1}-\Phi_c(r)\bigr)\pi_{c-1}.
\end{equation}

For general $1\le n\le c-2$, \eqref{eq:mm-dc-boundary} becomes
\begin{equation}\label{eq:boundary-general}
(\lambda+S_n)\pi_n
=
\lambda\pi_{n-1}
+\sum_{\ell=1}^{c-n-1}R_{n+\ell,\ell}\pi_{n+\ell}
+\mathcal T_n(r)\pi_{c-1}.
\end{equation}

\paragraph{Pre-tail coefficients.}
Define
\[
\pi_k=P_k(r)\pi_{c-1}, \qquad k=0,1,\ldots,c-1,
\]
with
\[
P_{c-1}(r)\equiv 1.
\]
From \eqref{eq:top-link}, we obtain the seed
\begin{equation}\label{eq:Pseed}
P_{c-2}(r)=\frac{\lambda+S_{c-1}-\Phi_c(r)}{\lambda}.
\end{equation}
For $n=c-2,c-3,\ldots,1$, divide \eqref{eq:boundary-general} by $\lambda$
and substitute $\pi_j=P_j(r)\pi_{c-1}$ to obtain the recursion
\begin{equation}\label{eq:Precursion}
P_{n-1}(r)
=
\frac{
(\lambda+S_n)P_n(r)
-\sum_{\ell=1}^{c-n-1}R_{n+\ell,\ell}P_{n+\ell}(r)
-\mathcal T_n(r)}
{\lambda}.
\end{equation}

\paragraph{Positivity of the pre-tail coefficients.}
We now show that
\[
P_k(r)>0,\qquad k=0,1,\ldots,c-1.
\]

For \(1\le n\le c-1\), summing the stationary balance equations over
states \(0,1,\ldots,n-1\) gives the cumulative balance equation
\[
\lambda \pi_{n-1}
=
\sum_{m=n}^{c-1}
\pi_m
\sum_{\ell=m-n+1}^{m} R_{m,\ell}
+
\sum_{m=c}^{n+c-1}
\pi_m
\sum_{\ell=m-n+1}^{c} R_{c,\ell}.
\]
Substituting
\[
\pi_m=P_m(r)\pi_{c-1},\qquad n\le m\le c-1,
\]
and
\[
\pi_m=\pi_{c-1}r^{m-(c-1)},\qquad m\ge c,
\]
yields
\begin{equation} \label{eq:P-positive}
P_{n-1}(r)
=
\frac{1}{\lambda}
\left[
\sum_{m=n}^{c-1}
P_m(r)
\sum_{\ell=m-n+1}^{m}R_{m,\ell}
+
\sum_{m=c}^{n+c-1}
r^{m-(c-1)}
\sum_{\ell=m-n+1}^{c}R_{c,\ell}
\right].
\end{equation}

Since \(P_{c-1}(r)=1\), \(0<r<1\), and all rates \(R_{m,\ell}\) are
nonnegative, this formula gives \(P_{n-1}(r)\ge0\) by backward induction.
If the queue-length chain is irreducible, then at least one term on the
right-hand side is strictly positive at each step, and hence
\[
P_k(r)>0,\qquad k=0,1,\ldots,c-1.
\]

\paragraph{Normalization.}
Since
\[
\sum_{n\ge c-1}\pi_n
=
\pi_{c-1}\sum_{j=0}^{\infty}r^j
=
\frac{\pi_{c-1}}{1-r},
\]
the normalization condition gives
\begin{equation}\label{eqn:pi(c-1)}
\pi_{c-1}
=
\left(
\sum_{k=0}^{c-2}P_k(r)+\frac{1}{1-r}
\right)^{-1}.
\end{equation}
Thus the full stationary distribution is
\begin{equation}\label{eq:full-solution-pretail}
\pi_k=P_k(r)\pi_{c-1}, \qquad 0\le k\le c-2,
\end{equation}
and
\begin{equation}\label{eq:full-solution-tail}
\pi_n=\pi_{c-1}r^{\,n-(c-1)}, \qquad n\ge c-1.
\end{equation}

\begin{remark}[Numerical computation]
Formula \eqref{eq:P-positive} also provides a stable backward algorithm
for computing the boundary coefficients. Starting from
\(P_{c-1}(r)=1\), one computes \(P_{c-2}(r),P_{c-3}(r) \),
\(\ldots,P_0(r)\)
successively. Since all terms on the right-hand side are nonnegative,
the recursion avoids subtractive cancellation and is numerically stable.
After the coefficients are obtained, the normalizing constant is given by
\[
\pi_{c-1}
=
\left(
\sum_{k=0}^{c-2}P_k(r)+\frac{1}{1-r}
\right)^{-1},
\]
and hence all stationary probabilities follow immediately.
\end{remark}


\paragraph{Final result.}
We summarize the above analysis in the following theorem.

\begin{theorem}\label{thm:mmdc-homogeneous-stationary}
For the homogeneous $M/M_D/c$ model under the stability condition
\eqref{eqn:stability}, the stationary probability vector $\pi$ is given by
\begin{align}
\pi_k &= P_k(r)\pi_{c-1}, \qquad 0\le k\le c-2, \\
\pi_{c-1}
&=
\left(
\sum_{k=0}^{c-2}P_k(r)+\frac{1}{1-r}
\right)^{-1}, \\
\pi_n &= \pi_{c-1}r^{\,n-(c-1)}, \qquad n\ge c-1,
\end{align}
where $0<r<1$ is the unique solution of the characteristic equation
\eqref{eq:char-eq}, and $P_k(r)$ is determined recursively by
\eqref{eq:Pseed} and \eqref{eq:Precursion}.
\end{theorem}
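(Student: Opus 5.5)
The plan is a guess-and-verify argument followed by a uniqueness step. First I would construct a candidate vector $\pi$ from the formulas in the theorem and check that it satisfies every balance equation. Then I would invoke uniqueness of the stationary distribution for an irreducible positive recurrent chain to conclude that this candidate is \emph{the} stationary vector.

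\textbf{The tail.} By the lemma preceding \eqref{eqn:stability}, under \eqref{eqn:stability} there is a unique $r\in(0,1)$ with $F(r)=0$. The existence part is elementary: $F$ is convex with $F(0)=\lambda>0$, $F(1)=0$ and $F'(1)=\Theta_c-\lambda>0$, so $F$ must be negative just left of $1$ and hence has a zero in $(0,1)$. Convexity rules out a second zero. With this $r$, the ansatz $\pi_n=\pi_{c-1}r^{n-(c-1)}$ for $n\ge c-1$ satisfies \eqref{eq:mm-dc-nonboundary} for every $n\ge c$, since that equation divided by $\pi_{c-1}r^{n-c}$ is exactly \eqref{eq:char-eq}.

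\textbf{The boundary.} The coefficients $P_{c-2},\dots,P_0$ are defined so that the boundary equations for $n=c-1,\dots,1$ hold identically. \eqref{eq:Pseed} is \eqref{eq:top-link}, and \eqref{eq:Precursion} is \eqref{eq:boundary-general} solved for $\pi_{n-1}$. This leaves only the equation for state $0$, \eqref{eq:mm-dc-0}, which has not been used and must be shown to hold automatically.

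\textbf{State $0$ — the main obstacle.} I would handle this with a global flux argument. Sum all balance equations for states $n\ge1$ (the series converge because $r<1$), and collect, for each source state, its total outflow versus the inflow it sends to states $\ge1$. Almost every term cancels. What survives is that $\lambda\pi_0$ (the inflow into $\{1,2,\dots\}$ from $0$) equals the total rate from $\{1,2,\dots\}$ into $0$, namely $\sum_{m=1}^c R_{m,m}\pi_m$. This is exactly \eqref{eq:mm-dc-0}. The rearrangement is justified because all rates are bounded and $\sum\pi_n<\infty$.

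\textbf{Positivity.} For positivity of the $P_k$ I would use the cumulative cut equations \eqref{eq:P-positive}. These follow from the balance equations once all of them, including state $0$, are known to hold. They can also be derived directly by downward induction from \eqref{eq:Precursion} together with \eqref{eq:char-eq}, by telescoping; I expect this telescoping identity to be the step needing the most bookkeeping. Given \eqref{eq:P-positive}, backward induction with irreducibility gives $P_k(r)>0$.

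\textbf{Conclusion.} Normalization then gives the formula \eqref{eqn:pi(c-1)} for $\pi_{c-1}$, so $\pi$ is a strictly positive probability solution of $\pi Q=0$. An irreducible, non-explosive CTMC (the rates here are bounded) that admits such a solution is positive recurrent, and its stationary distribution is unique. Hence $\pi$ is the stationary vector, as claimed.
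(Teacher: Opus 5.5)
Your proposal is correct and follows the paper's route: a geometric tail from the unique root $r\in(0,1)$ of \eqref{eq:char-eq}, the backward recursion \eqref{eq:Pseed}--\eqref{eq:Precursion} for the pre-tail coefficients, positivity via the cumulative cut equations \eqref{eq:P-positive}, and normalization. You also make explicit two steps the paper leaves unstated: that the unused state-$0$ equation \eqref{eq:mm-dc-0} holds automatically (the paper's cumulative-balance step uses it without comment), and that a positive summable solution of $\pi Q=0$ for this irreducible, bounded-rate chain must be the unique stationary distribution.
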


\begin{remark}
The stationary equations \eqref{eq:mm-dc-0}--\eqref{eq:mm-dc-nonboundary}
reduce to those of the classical $M/M/c$ queue when $\mu_{(k)}=0$ for all
$k\ge 2$, and reduce to the formulas obtained earlier for the homogeneous
$M/M_D/3$ model when $c=3$.
\end{remark}

\paragraph{Special cases.}
For $c=2$ and $c=3$, the formulas for $R_{m,\ell}$ reduce to the explicit
coefficients already obtained in the preceding subsections. In particular,
for $c=3$,
\[
R_{3,1}=3\mu_{(1)}, \qquad
R_{3,2}=3\mu_{(2)}, \qquad
R_{3,3}=\mu_{(3)},
\]
and the general characteristic equation \eqref{eq:char-eq} reduces to the
cubic equation derived earlier for the homogeneous $M/M_D/3$ model.

\section{Analysis of the $M/M_D/c$ model with heterogeneous servers}

In this section, we analyze the $M/M_D/c$ queue with a general
heterogeneous Marshall--Olkin service structure. In contrast to the
homogeneous case, symmetry across servers is absent, and therefore the
queue-length process $X(t)$ is not Markov in general. The correct Markovian
object is the queueing process $Y(t)$ introduced in the previous section.

We first consider a special case.

\subsection{A fully worked special case: heterogeneous $M/M_D/3$ model}

In this subsection, we present a complete analysis of the heterogeneous
$M/M_D/c$ model for the special case $c=3$. In particular, we show that
the stationary distribution can be constructed by combining a geometric
tail with a finite boundary system, and we rigorously justify the
consistency of this construction.

\paragraph{State space.}
Let $Y(t)$ be the queueing process, which is irreducible and stable. For $c=3$, the state space is
\[
S =
\{0\}
\cup \{(1),(2),(3)\}
\cup \{(1,2),(1,3),(2,3)\}
\cup \{3,4,5,\ldots\},
\]
where:
\begin{itemize}
\item $(i)$ means that only server $i$ is busy;
\item $(i,j)$ means that servers $i$ and $j$ are busy;
\item $3$ means that all three servers are busy and there is no queue;
\item $n\ge 4$ means that all three servers are busy and there are $n-3$
customers waiting in queue.
\end{itemize}

\paragraph{Parameters.}
Let the Marshall--Olkin shock rates be
\[
\mu_1,\mu_2,\mu_3,\qquad
\mu_{1,2},\mu_{1,3},\mu_{2,3},\qquad
\mu_{1,2,3},
\]
and define the total service intensity
\[
\Theta=
\mu_1+\mu_2+\mu_3
+\mu_{1,2}+\mu_{1,3}+\mu_{2,3}
+\mu_{1,2,3}.
\]

\paragraph{Non-boundary equations.}
For $n\ge 2c =6$, the balance equations are
\begin{equation}\label{eq:c3-tail}
(\lambda+\Theta)\pi_n
=
\lambda \pi_{n-1}
+
(\mu_1+\mu_2+\mu_3)\pi_{n+1}
+
(\mu_{1,2}+\mu_{1,3}+\mu_{2,3})\pi_{n+2}
+
\mu_{1,2,3}\pi_{n+3}.
\end{equation}

\paragraph{Geometric tail.}
For $n\ge 2c-1=5$, the balance equations reduce to the homogeneous recursion
\eqref{eq:c3-tail}, which yields the geometric solution
\[
\pi_n = \pi_5 r^{\,n-5}, \qquad 0<r<1,
\]
where $r$ is the unique root in $(0,1)$ of the characteristic equation
\begin{equation}\label{eq:c3-char}
\mu_{1,2,3}r^4
+(\mu_{1,2}+\mu_{1,3}+\mu_{2,3})r^3
+(\mu_1+\mu_2+\mu_3)r^2
-(\lambda+\Theta)r+\lambda=0.
\end{equation}

Define
\[
\Theta_3
=
(\mu_1+\mu_2+\mu_3)
+2(\mu_{1,2}+\mu_{1,3}+\mu_{2,3})
+3\mu_{1,2,3}.
\]
Under the stability condition
\[
\lambda<\Theta_3,
\]
equation \eqref{eq:c3-char} has a unique root \(r\in(0,1)\) (see a proof for general $c$ case provided later).

\paragraph{Boundary equations.}
Define
\begin{align*}
\beta_1 &= \mu_1+\mu_{1,2}+\mu_{1,3}+\mu_{1,2,3},\\
\beta_2 &= \mu_2+\mu_{1,2}+\mu_{2,3}+\mu_{1,2,3},\\
\beta_3 &= \mu_3+\mu_{1,3}+\mu_{2,3}+\mu_{1,2,3},
\end{align*}
and
\begin{align*}
\alpha_{12} &= \mu_1+\mu_2+\mu_{1,2}+\mu_{1,3}+\mu_{2,3}+\mu_{1,2,3},\\
\alpha_{13} &= \mu_1+\mu_3+\mu_{1,2}+\mu_{1,3}+\mu_{2,3}+\mu_{1,2,3},\\
\alpha_{23} &= \mu_2+\mu_3+\mu_{1,2}+\mu_{1,3}+\mu_{2,3}+\mu_{1,2,3}.
\end{align*}

The boundary states are
\[
\mathcal B =
\{0,(1),(2),(3),(1,2),(1,3),(2,3),3,4,5\}.
\]

The balance equations (after replacing $\pi_n=\pi_5 r^{n-5}$ for $n \geq 5$) are:
\begin{align}
\lambda \pi_0
&= \beta_1\pi_{(1)}+\beta_2\pi_{(2)}+\beta_3\pi_{(3)}
+(\mu_{1,2}+\mu_{1,2,3})\pi_{(1,2)}
+(\mu_{1,3}+\mu_{1,2,3})\pi_{(1,3)} \nonumber\\
&\quad
+(\mu_{2,3}+\mu_{1,2,3})\pi_{(2,3)}
+\mu_{1,2,3}\pi_{3},
\label{eq:c3-b0}
\\[1mm]
(\lambda+\beta_1)\pi_{(1)}
&= \lambda\pi_0
+(\mu_2+\mu_{2,3})\pi_{(1,2)}
+\mu_{2,3}\pi_{3}
+\mu_{1,2,3}\pi_4,
\label{eq:c3-b1}
\\[1mm]
(\lambda+\beta_2)\pi_{(2)}
&= (\mu_1+\mu_{1,3})\pi_{(1,2)}
+(\mu_3+\mu_{1,3})\pi_{(2,3)}
+\mu_{1,3}\pi_{3},
\label{eq:c3-b2}
\\[1mm]
(\lambda+\beta_3)\pi_{(3)}
&= (\mu_1+\mu_{1,2})\pi_{(1,3)}
+(\mu_2+\mu_{1,2})\pi_{(2,3)}
+\mu_{1,2}\pi_{3},
\label{eq:c3-b3}
\\[1mm]
(\lambda+\alpha_{12})\pi_{(1,2)}
&= \lambda(\pi_{(1)}+\pi_{(2)})
+\mu_3\pi_{3}
+(\mu_{1,3}+\mu_{2,3})\pi_4
+\mu_{1,2,3}\pi_5,
\label{eq:c3-b12}
\\[1mm]
(\lambda+\alpha_{13})\pi_{(1,3)}
&= \lambda\pi_{(3)}
+\mu_2\pi_{3}
+\mu_{1,2}\pi_4,
\label{eq:c3-b13}
\\[1mm]
(\lambda+\alpha_{23})\pi_{(2,3)}
&= \mu_1\pi_{3},
\label{eq:c3-b23}
\\[1mm]
(\lambda+\Theta)\pi_{3}
&= \lambda(\pi_{(1,2)}+\pi_{(1,3)}+\pi_{(2,3)})
+(\mu_1+\mu_2+\mu_3)\pi_4
+\bigl[(\mu_{1,2}+\mu_{1,3}+\mu_{2,3})+\mu_{1,2,3}r\bigr]\pi_5,
\label{eq:c3-bfull}
\\[1mm]
(\lambda+\Theta)\pi_4
&= \lambda\pi_{3}
+\bigl[(\mu_1+\mu_2+\mu_3)
+(\mu_{1,2}+\mu_{1,3}+\mu_{2,3})r
+\mu_{1,2,3}r^2\bigr]\pi_5.
\label{eq:c3-b4}
\\[1mm]
(\lambda+\Theta)\pi_5
&= \lambda\pi_{4}
+\bigl[(\mu_1+\mu_2+\mu_3)r
+(\mu_{1,2}+\mu_{1,3}+\mu_{2,3})r^2
+\mu_{1,2,3}r^3\bigr]\pi_5.
\label{eq:c3-b5}
\end{align}

\paragraph{Reduced boundary system.}
Since the boundary block now includes the linking level \(5\), it is most
convenient to include \(\pi_5\) directly among the boundary unknowns.
Define
\[
\widehat p:=
\bigl(
\pi_0,\,
\pi_{(1)},\,
\pi_{(2)},\,
\pi_{(3)},\,
\pi_{(1,2)},\,
\pi_{(1,3)},\,
\pi_{(2,3)},\,
\pi_{3},\,
\pi_4,\,
\pi_5
\bigr)^\top .
\]
After substituting the geometric tail representation
\[
\pi_n=\pi_5 r^{\,n-5}, \qquad n\ge 5,
\]
the boundary equations \eqref{eq:c3-b0}--\eqref{eq:c3-b5} can be written as
\begin{equation} \label{eq:c3-reduced-system-full}
\widehat A(r)\,\widehat p = 0,
\end{equation}
where $\widehat A(r)$ can be explicitly expressed.
To simplify notation, define
\[
\Phi_1(r):=
(\mu_1+\mu_2+\mu_3)
+
(\mu_{1,2}+\mu_{1,3}+\mu_{2,3})\,r
+
\mu_{1,2,3}\,r^2,
\]
\[
\Phi_2(r):=
(\mu_{1,2}+\mu_{1,3}+\mu_{2,3})
+
\mu_{1,2,3}\,r,
\]
and
\[
\Phi(r):=
(\mu_1+\mu_2+\mu_3)\,r
+
(\mu_{1,2}+\mu_{1,3}+\mu_{2,3})\,r^2
+
\mu_{1,2,3}\,r^3.
\]
Then,
\[
  \widehat A(r)= \hspace*{20cm}
\]
{\scriptsize
\[
\begin{pmatrix}
\lambda & -\beta_1 & -\beta_2 & -\beta_3 &
-(\mu_{1,2}+\mu_{1,2,3}) &
-(\mu_{1,3}+\mu_{1,2,3}) &
-(\mu_{2,3}+\mu_{1,2,3}) &
-\mu_{1,2,3} & 0 & 0 \\

-\lambda & \lambda+\beta_1 & 0 & 0 &
-(\mu_2+\mu_{2,3}) & 0 & 0 &
-\mu_{2,3} & -\mu_{1,2,3} & 0 \\

0 & 0 & \lambda+\beta_2 & 0 &
-(\mu_1+\mu_{1,3}) & 0 & -(\mu_3+\mu_{1,3}) &
-\mu_{1,3} & 0 & 0 \\

0 & 0 & 0 & \lambda+\beta_3 &
0 & -(\mu_1+\mu_{1,2}) & -(\mu_2+\mu_{1,2}) &
-\mu_{1,2} & 0 & 0 \\

0 & -\lambda & -\lambda & 0 &
\lambda+\alpha_{12} & 0 & 0 &
-\mu_3 & -(\mu_{1,3}+\mu_{2,3}) & -\mu_{1,2,3} \\

0 & 0 & 0 & -\lambda &
0 & \lambda+\alpha_{13} & 0 &
-\mu_2 & -\mu_{1,2} & 0 \\

0 & 0 & 0 & 0 &
0 & 0 & \lambda+\alpha_{23} &
-\mu_1 & 0 & 0 \\

0 & 0 & 0 & 0 &
-\lambda & -\lambda & -\lambda &
\lambda+\Theta & -(\mu_1+\mu_2+\mu_3) &
- \Phi_2(r) \\ 

0 & 0 & 0 & 0 &
0 & 0 & 0 &
-\lambda & \lambda+\Theta &
- \Phi_1(r) \\ 

0 & 0 & 0 & 0 &
0 & 0 & 0 &
0 & -\lambda &
(\lambda+\Theta)- \Phi(r) 
\end{pmatrix}.
\] }
The last row corresponds exactly to the linking equation
\eqref{eq:c3-b5}. Using the characteristic equation \eqref{eq:c3-char},
its last diagonal entry may also be written as \(\lambda/r\), so the last
row is equivalently
\[
(0,\ldots,0,-\lambda,\lambda/r).
\]

\paragraph{Consistency and uniqueness.}
\begin{theorem}
For the unique root \(r\in(0,1)\) of \eqref{eq:c3-char}, the reduced system
\[
\widehat A(r)\,\widehat p = 0
\]
has a one-dimensional positive solution space (under the stability condition and irreducibility of the queueing process). 
\end{theorem}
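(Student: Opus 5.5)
We prove that the kernel of $\widehat A(r)$ is exactly one-dimensional and is spanned by a strictly positive vector. The plan is to identify $\widehat A(r)^\top$ as (minus) the generator of a finite, irreducible, conservative Markov chain on the ten boundary states $\mathcal B$. The tail mass beyond level $5$ is folded back into the linking state $5$ using the geometric tail.

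First we check conservativity. Summing the columns of $\widehat A(r)$ shows that every column sums to zero. For the columns of states $0,(i),(i,j),3,4$ this is immediate, because each outflow $\lambda+\beta_i$, $\lambda+\alpha_{ij}$ or $\lambda+\Theta$ is split exactly among the destination states listed. For the last column, the column sum is
\[
-\mu_{1,2,3}-\Phi_2(r)-\Phi_1(r)+(\lambda+\Theta)-\Phi(r).
\]
This vanishes precisely by the characteristic equation \eqref{eq:c3-char}: summing the three balance equations for levels $3,4,5$ against the tail identity reproduces $\lambda=\lambda/r\cdot r$, which is just the quartic factored through $r\neq 1$. Equivalently, one can use the last diagonal entry $\lambda/r$ and the identity $\Phi_2+\Phi_1+\Phi=(\lambda/r)(\ldots)$ coming from \eqref{eq:c3-char}.

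Hence $Q_B:=-\widehat A(r)^\top$ has nonnegative off-diagonal entries and zero row sums. The off-diagonal nonnegativity holds because $0<r<1$ makes $\Phi_1,\Phi_2,\Phi\ge0$, and the diagonal $\lambda/r-\Phi(r)/r\cdot r$ is positive. So $Q_B$ is a conservative generator, and $\widehat A(r)\widehat p=0$ is exactly $\widehat p^\top Q_B=0$.

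Second, we show that $Q_B$ is irreducible on $\mathcal B$. Arrivals give the path
\[
0\to(1)\to(1,2)\to 3\to 4\to 5 .
\]
Shock transitions give returns: $5\to 4$ by rate $\Phi_1$ terms (via $\mu_1+\mu_2+\mu_3$, i.e.\ the entry in row $9$), and $3\to(2,3),(1,3),(1,2)$ and $(\cdot)\to 0$. Irreducibility of $Y(t)$, which is assumed, supplies positive rates for the remaining links such as $(1,3)\to(3)$ and $(2,3)\to(2),(3)$. A cleaner argument is that any path of $Y$ between two boundary states either stays in $\mathcal B$ or exits above $5$ and returns. The folded generator retains the corresponding aggregated transition from $5$ (through the $r$-weighted terms), so the communication classes of $Y$ restricted to $\mathcal B$ are preserved.

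Third, by Perron–Frobenius for irreducible finite generators (or the $M$-matrix property: $\widehat A(r)$ is a singular irreducible $M$-matrix), the left null space of $Q_B$ is one-dimensional and spanned by a strictly positive vector. That vector gives the one-dimensional positive solution space of $\widehat A(r)\widehat p=0$. Normalizing with $\sum_{\mathcal B\setminus\{5\}}+\pi_5/(1-r)=1$ then fixes the constant.

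The main obstacle is the bookkeeping in two places. The first is verifying that the last column sums to zero, which is where the choice of $r$ as the root of \eqref{eq:c3-char} is essential. The second is showing that the folding preserves irreducibility, and we would argue this via the induced (censored) chain on $\mathcal B$ rather than entry by entry.
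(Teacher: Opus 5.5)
Your route is correct, and it differs genuinely from the paper's. The paper splits off $\pi_5$ and reads the first nine rows as $A\tilde p=\pi_5 b(r)$, with $A$ the (transposed) generator of the boundary chain killed on entering level $5$. Transience then gives $A^{-1}=\int_0^\infty e^{Qt}\,dt\ge0$ and $\tilde p=\pi_5A^{-1}b(r)>0$. You instead make $Q_B=-\widehat A(r)^\top$ a conservative irreducible generator on all ten states and apply Perron--Frobenius.

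The paper's route yields an explicit formula for the boundary vector. As written, however, it treats only nine of the ten equations. It asserts equivalence with $\widehat A(r)\widehat p=0$ without checking the level-$5$ row, and it uses nothing about $r$ beyond $0<r<1$.

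Your route settles existence and uniqueness together, and the characteristic equation enters exactly where it must. The last column sums to $\lambda-\Theta r-(\mu_{1,2}+\mu_{1,3}+\mu_{2,3}+\mu_{1,2,3})r^2-\mu_{1,2,3}r^3$. This is minus the cubic cofactor of $(r-1)$ in \eqref{eq:c3-char}, hence zero, so the linking row follows from the others. State this identity and $\lambda+\Theta-\Phi(r)=\lambda/r$ cleanly; your text garbles both.

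For irreducibility, the precise version of your censoring remark is as follows. $Y$ leaves $\mathcal B$ upward only from $5$ and re-enters only at $3,4,5$. The rates $\Phi_2(r)$ and $\Phi_1(r)$ are positive whenever such a passage from $5$ to $3$, resp.\ $4$, is possible. So the graph of $Q_B$ contains that of the censored chain on $\mathcal B$, which is irreducible because $Y$ is.

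One caveat: your claim that every column of $\widehat A(r)$ sums to zero is false for the matrix as printed. In state $(1,3)$, shocks of type $\{3\}$ and $\{2,3\}$ send the chain to $(1)$ at rate $\mu_3+\mu_{2,3}$. This inflow is missing from \eqref{eq:c3-b1} and from row $(1)$ of $\widehat A(r)$, so the $(1,3)$ column sums to $\mu_3+\mu_{2,3}$.

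With the printed matrix, $-\widehat A(r)^\top$ has killing at $(1,3)$, and every state can still reach $(1,3)$. So $\widehat A(r)$ is nonsingular and its kernel is trivial whenever $\mu_3+\mu_{2,3}>0$. The paper's numerical values bear this out: they satisfy the printed row $(1)$ but violate the level-$5$ row, since $\pi_5/\pi_4\approx0.73\neq r$.

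So derive the column sums from the transition structure rather than from the display. Once the entry $-(\mu_3+\mu_{2,3})$ is restored, your proof goes through. Your conservativity check is exactly what exposes the omission, which the paper's nine-row argument cannot detect.
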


\begin{proof}
The matrix \(\widehat A(r)\) is a \(Z\)-matrix: all diagonal entries are
strictly positive and all off-diagonal entries are nonpositive.

Let \(\mathcal B=\{0,(1),(2),(3),(1,2),(1,3),(2,3),3,4,5\}\). Then
\(\widehat A(r)\) is obtained from the boundary balance equations by
substituting the geometric tail for all levels \(n\ge 5\). Equivalently,
it is the coefficient matrix of the boundary subsystem with the linking
equation at level \(5\) included.

To establish nonnegativity and uniqueness up to scale, it is convenient to
separate the linking variable \(\pi_5\). Write
\[
\widehat p=
\begin{pmatrix}
\tilde p \\ \pi_5
\end{pmatrix},
\qquad
\tilde p=
\bigl(
\pi_0,\,
\pi_{(1)},\,
\pi_{(2)},\,
\pi_{(3)},\,
\pi_{(1,2)},\,
\pi_{(1,3)},\,
\pi_{(2,3)},\,
\pi_3,\,
\pi_4
\bigr)^\top.
\]
Then \eqref{eq:c3-reduced-system-full} is equivalent to
\[
A\,\tilde p=\pi_5\,b(r),
\]
where \(A\) is the \(9\times 9\) northwest block of \(\widehat A(r)\), and
\(b(r)\) is the negative of the first nine entries of the last column of
\(\widehat A(r)\). The matrix \(A\) is exactly the boundary coefficient
matrix obtained by killing the process upon entering level \(5\).

From any state in \(\mathcal B^-=\mathcal B\setminus\{5\}\), there exists a path to the linking
level \(5\) consisting only of arrivals, for example
\[
0\to(1)\to(1,2)\to3\to4\to5.
\]
Hence the killed boundary chain is transient, and therefore
\[
A^{-1}
=
(-Q_{\mathcal B^-\mathcal B^-})^{-1}
=
\int_0^\infty e^{Q_{\mathcal B^-\mathcal B^-}t}\,dt
\ge 0
\]
entrywise.
Thus \(A\) is a nonsingular \(M\)-matrix. 
Since the chain is irreducible, every boundary state can reach a state receiving positive input from $b(r)$. Therefore,
\[
    A^{-1} b(r)>0
\]
holds, and then for every \(\pi_5>0\),
\[
\tilde p=\pi_5 A^{-1}b(r) > 0.
\]
Consequently, the homogeneous system
\(\widehat A(r)\widehat p=0\) has a one-dimensional nonnegative solution
space. 
\end{proof}

\paragraph{Consistency with the full balance equations.}
The reduced system is obtained directly from the original balance equations
by substituting the geometric tail representation for all states \(n\ge 5\).
In particular, the equation at level \(5\) is kept explicitly as the
linking equation between the boundary subsystem and the tail. Hence any
solution of \eqref{eq:c3-reduced-system-full} automatically satisfies the
full set of balance equations.

\paragraph{Normalization.}
The normalization condition is
\begin{equation}\label{eq:c3-normalization}
\pi_0+\pi_{(1)}+\pi_{(2)}+\pi_{(3)}
+\pi_{(1,2)}+\pi_{(1,3)}+\pi_{(2,3)}
+\pi_3+\pi_4+\frac{\pi_5}{1-r}=1.
\end{equation}
Therefore \(\pi_5\) is uniquely determined by normalization, and so the
full stationary distribution is uniquely specified.

\paragraph{Numerical illustration.}
Take
\[
\lambda=1.6,\qquad
\mu_1=0.5,\ \mu_2=0.7,\ \mu_3=0.9,
\]
\[
\mu_{1,2}=0.15,\qquad
\mu_{1,3}=0.10,\qquad
\mu_{2,3}=0.20,\qquad
\mu_{1,2,3}=0.05.
\]
Then the characteristic equation \eqref{eq:c3-char} has the unique root
\[
r\approx 0.55326254.
\]
Solving the reduced system together with normalization yields
\begin{align*}
\pi_0 &\approx 0.17262222, &
\pi_{(1)} &\approx 0.18708884, &
\pi_{(2)} &\approx 0.04659013, \\
\pi_{(3)} &\approx 0.02133077, &
\pi_{(1,2)} &\approx 0.15797058, &
\pi_{(1,3)} &\approx 0.03978579, \\
\pi_{(2,3)} &\approx 0.01782241, &
\pi_3 &\approx 0.13188586, &
\pi_4 &\approx 0.08533953, \\
\pi_5 &\approx 0.06234841. &&
\end{align*}
Hence
\[
\pi_n = 0.06234841\, r^{\,n-5}, \qquad n\ge 5.
\]

Substitution verifies the balance equations, confirming consistency.

\begin{remark}
This example demonstrates that the heterogeneous model retains a geometric
tail, while the boundary probabilities must be obtained by solving a finite
linear system. This is the essential difference from the homogeneous case,
where the boundary structure collapses under symmetry.
\end{remark}

\subsection{Analysis of heterogeneous $M/M_D/c$ model}

In this subsection, we present the full stationary structure of the
heterogeneous $M/M_D/c$ queue. The analysis parallels the fully worked
case $c=3$, and is based on a geometric tail combined with a finite
boundary system.

\paragraph{State space.}
The state space is
\[
S = \{0\}
\cup \{ s \subseteq \{1,\ldots,c\}: 1 \le |s| \le c-1 \}
\cup \{c,c+1,\ldots\}.
\]

For a subset state $s$, let $m:=|s|$. If $m<c$, define $a(s)$ as the state
obtained by assigning an arrival to the smallest idle server.

\paragraph{Marginal completion rates.}
For any nonempty subset $B\subseteq s$, define
\[
\nu_s(B)
=
\sum_{A:\,A\cap s=B}\mu_A.
\]
Define also the total completion rate from state $s$:
\[
\alpha(s)
=
\sum_{\emptyset\neq B\subseteq s}\nu_s(B)
=
\sum_{A:\,A\cap s\neq\emptyset}\mu_A.
\]

\paragraph{Full-busy completion rates.}
Define
\[
R_\ell
:=
\sum_{|A|=\ell}\mu_A,
\qquad
1\le \ell\le c,
\qquad
\Sigma=\sum_{\ell=1}^{c}R_\ell.
\]

\paragraph{Non-boundary equations.}
For $n\ge 2c$, the balance equations are
\begin{equation}\label{eq:hetero-tail}
(\lambda+\Sigma)\pi_n
=
\lambda\pi_{n-1}
+
\sum_{\ell=1}^{c} R_\ell \pi_{n+\ell}.
\end{equation}

\paragraph{Geometric tail.}
For $n\ge 2c-1$, we write
\[
\pi_n=\pi_{2c-1} r^{\,n-(2c-1)}, \qquad 0<r<1.
\]
Substituting this into \eqref{eq:hetero-tail} yields the characteristic equation
\begin{equation}\label{eq:hetero-char-new}
\lambda
+
\sum_{\ell=1}^{c}R_\ell r^{\ell+1}
=
r(\lambda+\Sigma).
\end{equation}

Define
\[
\Theta
=
\sum_{\ell=1}^{c}\ell R_\ell.
\]

\begin{lemma}\label{lem:hetero-root-general}
There exists a unique root $r\in(0,1)$ of \eqref{eq:hetero-char-new}
if and only if
\[
\lambda<\Theta.
\]
\end{lemma}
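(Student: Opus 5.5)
We will study the polynomial
\[
F(r):=\lambda+\sum_{\ell=1}^{c}R_\ell r^{\ell+1}-r(\lambda+\Sigma),
\]
whose zeros in $(0,1)$ are exactly the roots of \eqref{eq:hetero-char-new}. The argument mirrors the homogeneous case, with $R_{c,\ell}$ replaced by $R_\ell$, and uses only the convexity of $F$ and its behaviour at $r=0$ and $r=1$.

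First we record the values at the endpoints. We have $F(0)=\lambda>0$. Since $\Sigma=\sum_{\ell}R_\ell$, we also get $F(1)=\lambda+\Sigma-(\lambda+\Sigma)=0$, so $r=1$ is always a root. Next,
\[
F''(r)=\sum_{\ell=1}^{c}(\ell+1)\ell R_\ell r^{\ell-1}\ge0 \quad\text{on } [0,\infty).
\]
This is strictly positive on $(0,\infty)$ unless all $R_\ell=0$, which is excluded by irreducibility: some $\mu_A>0$, so some $R_\ell>0$. Hence $F$ is strictly convex on $(0,\infty)$. Finally,
\[
F'(1)=\sum_{\ell=1}^{c}(\ell+1)R_\ell-(\lambda+\Sigma)=\sum_{\ell}\ell R_\ell-\lambda=\Theta-\lambda.
\]

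For sufficiency, assume $\lambda<\Theta$. Then $F'(1)>0$, and since $F(1)=0$, $F$ is negative just to the left of $1$. Because $F(0)>0$, the intermediate value theorem gives a root in $(0,1)$. For uniqueness, a strictly convex function has at most two zeros on $[0,1]$. One of them is $r=1$, so at most one lies in $(0,1)$.

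For necessity, assume $\lambda\ge\Theta$, so that $F'(1)\le0$. Strict convexity makes $F'$ strictly increasing, hence $F'(r)<0$ on $(0,1)$. Thus $F$ is strictly decreasing on $[0,1]$ with $F(1)=0$, so $F(r)>0$ on $[0,1)$ and there is no root in $(0,1)$.

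The only delicate point is degeneracy: we need strict convexity, or at least that $F$ is not affine on $[0,1]$. If only $R_1>0$, then $F''(r)=2R_1>0$, so this case is still fine. The one genuinely degenerate case is all $R_\ell=0$, and it is ruled out by the standing stability and irreducibility assumptions.
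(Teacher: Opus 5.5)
Your proof is correct and follows essentially the same route as the paper: the convexity of $F$, the values $F(0)=\lambda>0$ and $F(1)=0$, and the sign of $F'(1)=\Theta-\lambda$. Your two departures are minor streamlinings of the paper's three-case argument: you handle the cases $\lambda=\Theta$ and $\lambda>\Theta$ together via strict monotonicity of $F'$, and you get uniqueness from the fact that a strictly convex function has at most two zeros, instead of locating the minimizer $r_0$. (The case with all $R_\ell=0$ needs no appeal to irreducibility or stability: then $\Theta=0<\lambda$ and $F(r)=\lambda(1-r)$ has no root in $(0,1)$, which is consistent with the lemma.)
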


\begin{proof}
Define
\[
F(r):=\lambda+\sum_{\ell=1}^{c}R_\ell r^{\ell+1}-r(\lambda+\Sigma),
\qquad r\ge 0.
\]
Then \eqref{eq:hetero-char-new} is equivalent to \(F(r)=0\).

We first note that
\[
F(0)=\lambda>0,
\qquad
F(1)=\lambda+\sum_{\ell=1}^{c}R_\ell-(\lambda+\Sigma)=0.
\]
Moreover,
\[
F'(r)=\sum_{\ell=1}^{c}(\ell+1)R_\ell r^\ell-(\lambda+\Sigma),
\]
and
\[
F''(r)=\sum_{\ell=1}^{c}(\ell+1)\ell R_\ell r^{\ell-1}\ge 0,
\qquad r>0.
\]
Hence \(F\) is convex on \((0,\infty)\).

Also,
\[
F'(1)
=
\sum_{\ell=1}^{c}(\ell+1)R_\ell-(\lambda+\Sigma)
=
\sum_{\ell=1}^{c}\ell R_\ell-\lambda
=
\Theta-\lambda.
\]

If \(\lambda<\Theta\), then \(F'(1)>0\). Since \(F'(0)=-(\lambda+\Sigma)<0\)
and \(F'\) is increasing, there exists a unique \(r_0\in(0,1)\) such that
\(F'(r_0)=0\). Thus \(F\) is strictly decreasing on \((0,r_0)\) and strictly
increasing on \((r_0,\infty)\). Because \(F(0)>0\) and \(F(1)=0\), convexity
implies that \(F\) crosses the horizontal axis exactly once in \((0,1)\).
Hence there exists a unique root \(r\in(0,1)\).

If \(\lambda=\Theta\), then \(F'(1)=0\). Since \(F\) is convex and \(F(1)=0\),
the point \(r=1\) is the unique minimizer of \(F\) on \([0,\infty)\). Hence
\(F(r)>0\) for all \(0<r<1\), so there is no root in \((0,1)\).

If \(\lambda>\Theta\), then \(F'(1)<0\). Since \(F'\) is increasing and still
negative at \(r=1\), we have \(F'(r)<0\) for all \(0\le r\le 1\). Thus \(F\)
is strictly decreasing on \([0,1]\), and because \(F(0)>0\) and \(F(1)=0\),
it follows that \(F(r)>0\) for all \(0<r<1\). So again there is no root in
\((0,1)\).

Therefore, \eqref{eq:hetero-char-new} has a unique root \(r\in(0,1)\) if and
only if \(\lambda<\Theta\).
\end{proof}

\paragraph{Boundary states.}
Define the boundary set
\[
\mathcal B
=
\{0\}
\cup \{s:1\le |s|\le c-1\}
\cup \{c,c+1,\ldots,2c-1\}.
\]

\paragraph{Boundary equations.}

For a subset state \(s\) with \(|s|=m\le c-1\), the balance equation is
\begin{equation}\label{eq:hetero-boundary}
(\lambda+\alpha(s))\pi_s
=
\sum_{u:\,a(u)=s}\lambda\pi_u
+
\sum_{u\supset s}\nu_u(u\setminus s)\pi_u
+
\sum_{\ell=c-m}^{c}\Gamma_{\ell,s}\,\pi_{m+\ell},
\end{equation}
where
\[
\Gamma_{\ell,s}
=
\sum_{|A|=\ell}\mu_A\,
\mathbf 1\{\text{a shock affecting }A\text{ leads to state }s\}.
\]

For the full-busy boundary levels \(n=c,c+1,\ldots,2c-1\), the balance
equations are
\begin{equation}\label{eq:hetero-boundary-level}
(\lambda+\Sigma)\pi_n
=
\lambda a_n
+
\sum_{\ell=1}^{c}R_\ell\,\pi_{n+\ell},
\qquad c\le n\le 2c-1,
\end{equation}
where
\[
a_n=
\begin{cases}
\displaystyle
\sum_{|s|=c-1}\pi_s,
&
n=c,
\\[3mm]
\pi_{n-1},
&
c+1\le n\le 2c-1.
\end{cases}
\]

Whenever \(n+\ell\ge 2c-1\), the probabilities
\(\pi_{n+\ell}\) are replaced by the geometric tail
\[
\pi_{n+\ell}
=
\pi_{2c-1}r^{\,n+\ell-(2c-1)}.
\]

\paragraph{Reduced boundary system.}

Let
\[
\mathcal B^-:=\mathcal B\setminus\{2c-1\}.
\]

Write
\[
\widehat p
=
(\pi_x:x\in\mathcal B)^\top,
\]
and
\[
\tilde p
=
(\pi_x:x\in\mathcal B^-)^\top.
\]

After substituting the geometric tail
\[
\pi_n
=
\pi_{2c-1}r^{\,n-(2c-1)},
\qquad n\ge 2c-1,
\]
into the boundary equations, the finite boundary system can be written as
\begin{equation} \label{eq:hetero-reduced}
\widehat A(r)\widehat p=0.
\end{equation}

\paragraph{Consistency and uniqueness.}
\begin{theorem}
Consider an irreducible stable heterogeneous $M/M_D/c$ queue, i.e., the queueing process is irreducible satisfying
\[
\lambda < \Theta.
\]
Let $r\in(0,1)$ be the unique solution of the characteristic equation
\eqref{eq:hetero-char-new}. Then the reduced system given in \eqref{eq:hetero-reduced}
has a one-dimensional positive solution space.
After normalization, the stationary distribution is uniquely determined.
\end{theorem}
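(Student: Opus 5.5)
The argument follows the fully worked case $c=3$, now for general $c$. First, Lemma~\ref{lem:hetero-root-general} gives a unique root $r\in(0,1)$ of \eqref{eq:hetero-char-new}. With this $r$, the geometric sequence $\pi_n=\pi_{2c-1}r^{\,n-(2c-1)}$ satisfies the homogeneous recursion \eqref{eq:hetero-tail} for every $n\ge 2c$. Substituting the tail turns the infinite system into the finite system $\widehat A(r)\widehat p=0$ indexed by $\mathcal B$. Its last row is the linking equation at level $2c-1$, which by the characteristic equation reduces to $-\lambda\pi_{2c-2}+(\lambda/r)\pi_{2c-1}=0$. We then split $\widehat p=(\tilde p,\pi_{2c-1})$ and write the first $|\mathcal B^-|$ rows as
\[
A\,\tilde p=\pi_{2c-1}\,b(r).
\]
Here $A$ is the northwest block of $\widehat A(r)$ and $b(r)\ge 0$ collects all inflow coefficients into $\mathcal B^-$ from levels $n\ge 2c-1$. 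These coefficients are polynomials in $r$ with nonnegative coefficients built from the $\mu_A$.

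\textbf{Step 1: $A$ is a nonsingular $M$-matrix.} The diagonal entries $\lambda+\alpha(s)$ and $\lambda+\Sigma$ are positive and the off-diagonal entries are nonpositive, so $A$ is a $Z$-matrix. The key identification is that $A=-Q_{\mathcal B^-\mathcal B^-}$, the generator of the chain killed on leaving $\mathcal B^-$. The diagonal entries are the full outflow rates, while the off-diagonal entries are exactly the internal transition rates, since all jumps to levels $\ge 2c-1$ are moved into $b(r)$. From every state of $\mathcal B^-$ there is a path of arrivals only, namely filling servers by smallest index and then climbing levels $c,\dots,2c-1$, that exits $\mathcal B^-$. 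Hence the killed chain is transient, $A^{-1}=\int_0^\infty e^{Q_{\mathcal B^-\mathcal B^-}t}\,dt\ge 0$ is finite, and $A$ is invertible.

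\textbf{Step 2: positivity.} We set $\tilde p=\pi_{2c-1}A^{-1}b(r)$. The vector $b(r)$ is nonzero because $\lambda>0$ supplies positive inflow into $2c-2$ from the tail: the row for $2c-2$ receives $R_1\pi_{2c-1}+\dots$, and $R_1+\dots+R_c>0$. The entry $(A^{-1})_{xy}$ is the expected time spent at $y$ starting from $x$, which is positive whenever $y$ is reachable from $x$ inside $\mathcal B^-$. Irreducibility of $Y(t)$ gives a path from every state into a state with $b_y>0$. Here some care is needed: the path should be chosen within $\mathcal B^-$ until it first reaches such a state $y$, and then the claim follows by taking the first state of the path to which the tail feeds. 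This gives $A^{-1}b(r)>0$.

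\textbf{Step 3: the last equation and the full balance equations.} We must show that $\tilde p$ also satisfies the last (linking) row; this is the main obstacle, since the system has one more equation than free unknowns. The plan is to use conservation. Summing all balance equations of the original generator over all states gives $0$, because each row of $Q$ sums to zero. Restricted to the reduced system, this says that $\mathbf 1^\top\widehat A(r)\widehat p$ equals a total-flux expression. We evaluate it using $F(r)=0$, following the cumulative-balance trick of \eqref{eq:P-positive}: cut between levels $2c-2$ and $2c-1$ and compare the upward flow $\lambda\pi_{2c-2}$ with the downward flow, which is the tail sum $\sum_{m\ge 2c-1}\pi_m\sum_{\ell>m-(2c-2)}R_\ell$. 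Inserting the geometric form, this downward flow equals $\pi_{2c-1}\lambda/r$. The derivation uses $\lambda(1-r)=\sum_\ell R_\ell r(1-r^\ell)$, which is $F(r)/(1-r)$ rearranged.

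Consequently the column sums of $\widehat A(r)$ vanish, so the last row is minus the sum of the others, and $\widehat A(r)\widehat p=0$ holds for $\widehat p=(\tilde p,\pi_{2c-1})$. The equations at $n\ge 2c$ hold by construction of the tail, so the full balance equations are satisfied.

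\textbf{Step 4: uniqueness and normalization.} Uniqueness up to scale follows because any solution has $\tilde p$ determined by $\pi_{2c-1}$ through $A^{-1}$. Normalization,
\[
\sum_{x\in\mathcal B^-}\pi_x+\frac{\pi_{2c-1}}{1-r}=1,
\]
then fixes $\pi_{2c-1}>0$. Since the chain is irreducible and this is a positive summable invariant measure, it is the unique stationary distribution.
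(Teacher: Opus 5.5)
Your proposal is correct in substance and follows the paper's proof: substitute the geometric tail, split off $\pi_{2c-1}$, read $A$ as the boundary chain killed on entering level $2c-1$, get a nonsingular $M$-matrix from the all-arrivals path, and take uniqueness and positivity from $\tilde p=\pi_{2c-1}A^{-1}b(r)$.

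The one step you handle differently is the linking equation, and there you are more careful than the paper. The paper only asserts that conservativeness of the generator makes one balance equation redundant. Your Step~3 supplies the actual reason. Each column of $\widehat A(r)$ indexed by $y\in\mathcal B^-$ sums to zero, because every jump out of $y$ lands in $\mathcal B$. The column of $\pi_{2c-1}$ sums to
\[
(\lambda+\Sigma)-\sum_{\ell=1}^c R_\ell r^\ell-\sum_{j\ge0}r^j\sum_{\ell>j}R_\ell=\frac{F(r)}{1-r}.
\]
So the linking row is implied by the others exactly because $r$ is a root of \eqref{eq:hetero-char-new}. One typo: in your cut formula the index should be $\ell\ge m-(2c-2)$, i.e.\ $\ell>m-(2c-1)$. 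With $\ell>m-(2c-2)$ the one-step jump $2c-1\to 2c-2$ is dropped, and the sum is no longer $\lambda/r$.

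One correction is needed in Step~2, and the paper makes the same slip. The rows of $\widehat A(r)$ are balance (inflow) equations, so $A_{xy}=-q_{yx}$ and $A=-Q_{\mathcal B^-\mathcal B^-}^{\top}$, not $-Q_{\mathcal B^-\mathcal B^-}$. This is harmless for invertibility and for $A^{-1}\ge0$, but it reverses the reachability you need. $(A^{-1})_{xy}$ is the expected time spent at $x$ starting from $y$. So $\tilde p_x>0$ requires a path inside $\mathcal B^-$ \emph{from} some $y$ with $b_y>0$ \emph{to} $x$.

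The direction you argue, from $x$ to a tail-fed state, is automatic: arrivals lead to $2c-2$, and $b_{2c-2}=\sum_\ell R_\ell r^{\ell-1}>0$. But it does not give positivity. The needed direction also follows from irreducibility by a last-exit argument:
\begin{itemize}
\item Take a positive-rate path from level $2c-1$ to $x$.
\item Let $z$ be its last state at a level $\ge 2c-1$, and let $y$ be the next state.
\item Then $b_y\ge q_{zy}r^{z-(2c-1)}>0$, and the remainder of the path stays in $\mathcal B^-$.
\end{itemize}
With this fix your argument is complete, and it also fills in the step the paper leaves unjustified.
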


\begin{proof}
The matrix \(\widehat A(r)\) is a \(Z\)-matrix, i.e., all off-diagonal
entries are nonpositive.

Write
\[
\widehat p=
\begin{pmatrix}
\tilde p\\
\pi_{2c-1}
\end{pmatrix},
\]
where \(\tilde p\) contains all boundary probabilities except the linking
probability \(\pi_{2c-1}\).

Then \eqref{eq:hetero-reduced} can be rewritten as
\[
A\,\tilde p
=
\pi_{2c-1} b(r),
\]
where \(A\) is obtained from \(\widehat A(r)\) by deleting the row and
column corresponding to the linking state \(2c-1\).

The balance equation associated with the linking state \(2c-1\) may be
omitted when constructing the reduced system. Indeed, in the full
stationary system the infinitesimal generator is conservative, so one
stationary balance equation is linearly dependent on the others. Hence,
after imposing the normalization condition, it is sufficient to solve
the reduced system obtained by deleting the linking equation.

The matrix \(A\) is the coefficient matrix associated with the boundary
process restricted to \(\mathcal B^-\) and killed upon entering the
linking level \(2c-1\).
Since arrivals occur at rate \(\lambda>0\), from every state
in \(\mathcal B^-\) there exists a path consisting solely of arrivals
that reaches the linking level \(2c-1\) with strictly positive
probability. Hence every state of the killed boundary chain is
transient.

Let \(Q\) denote the generator of this killed chain. Then
\[
A=-Q,
\]
and therefore
\[
A^{-1}
=
(-Q)^{-1}
=
\int_0^\infty e^{Qt}\,dt
\ge0
\]
entrywise.

Thus \(A\) is a nonsingular \(M\)-matrix.
Since the killed boundary chain is transient and irreducible on the
communicating boundary class, its potential matrix satisfies
\[
A^{-1}
=
(-Q)^{-1}
=
\int_0^\infty e^{Qt}\,dt
>0
\]
entrywise on that class.

Moreover, all components of \(b(r)\) are nonnegative, and at least one
component is strictly positive, since \(b(r)\) consists of geometric-tail
contributions generated by positive transition rates and
\(\pi_{2c-1}>0\). Therefore
\[
\tilde p
=
\pi_{2c-1}A^{-1}b(r)
>0.
\]
Since \(A\) is nonsingular, this solution is unique.

Consequently, the reduced system
\[
\widehat A(r)\widehat p=0
\]
has a one-dimensional nonnegative solution space. After imposing the
normalization condition, the stationary distribution is uniquely
determined.
\end{proof}

\paragraph{Normalization.}
The normalization condition is
\begin{equation}\label{eq:hetero-normalization}
\sum_{s\in\mathcal B\setminus\{2c-1\}}\pi_s
+
\frac{\pi_{2c-1}}{1-r}
=1.
\end{equation}

\paragraph{Summary.}
\begin{theorem}
Under \(\lambda<\Theta\), the stationary distribution is uniquely given by:
\begin{itemize}
\item geometric tail:
\[
\pi_n=\pi_{2c-1} r^{\,n-(2c-1)}, \qquad n\ge 2c-1;
\]
\item boundary solution from \eqref{eq:hetero-reduced};
\item normalization \eqref{eq:hetero-normalization}.
\end{itemize}
\end{theorem}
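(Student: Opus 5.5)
The plan is to assemble the summary theorem from three ingredients already established: the tail root (Lemma~\ref{lem:hetero-root-general}), the positive one-dimensional boundary solution (the preceding consistency theorem), and a verification that the resulting vector satisfies every balance equation of $Y(t)$. Uniqueness then follows from irreducibility of the chain.

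\textbf{Step 1 (tail).} Given $\lambda<\Theta$, Lemma~\ref{lem:hetero-root-general} provides a unique $r\in(0,1)$ solving \eqref{eq:hetero-char-new}. For $n\ge 2c$ the equation \eqref{eq:hetero-tail} involves only full-busy levels $n-1,\dots,n+c$, all of which are $\ge 2c-1$. Substituting $\pi_n=\pi_{2c-1}r^{n-(2c-1)}$ reduces each such equation to $r^{n-1-(2c-1)}$ times \eqref{eq:hetero-char-new}. Hence every non-boundary equation holds identically, for any value of $\pi_{2c-1}$.

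\textbf{Step 2 (boundary and linking).} By the consistency theorem, $\widehat A(r)\widehat p=0$ has a positive solution $\widehat p=\pi_{2c-1}(A^{-1}b(r),1)$, unique up to scale. This solution satisfies the boundary equations \eqref{eq:hetero-boundary} and \eqref{eq:hetero-boundary-level} for $n<2c-1$, since those equations contain exactly the tail substitution.

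The main obstacle is the linking equation at level $2c-1$, which was dropped in constructing $A$. I would recover it by conservativity rather than by direct computation. Sum all balance equations of the full generator $Q$ against the constructed vector $\pi$. This is legitimate because $\pi$ is summable (the tail is geometric) and the row sums of $Q$ vanish with bounded rates, so $\sum_x(\pi Q)_x=0$. All entries of $\pi Q$ are zero except possibly the one at $2c-1$, so that entry must vanish too. Alternatively, one can check directly that at level $2c-1$ the equation reads $(\lambda+\Sigma)=\lambda/r+\sum_\ell R_\ell r^\ell$, using $\pi_{2c-2}=\pi_{2c-1}/r$. The latter identity should follow from the level-$(2c-2)$ equation together with \eqref{eq:hetero-char-new}, just as the last row reduced to $(0,\dots,-\lambda,\lambda/r)$ in the $c=3$ case. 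I would present the conservativity argument, with the $c=3$ computation as a check.

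\textbf{Step 3 (normalization and uniqueness).} Positivity and $r<1$ make the total mass finite. Normalization \eqref{eq:hetero-normalization} therefore fixes $\pi_{2c-1}>0$ uniquely, and we obtain a probability vector with $\pi Q=0$. Since $Y(t)$ is an irreducible CTMC with bounded rates (so it is non-explosive), the existence of a stationary probability vector implies positive recurrence, and that stationary distribution is unique. It must therefore coincide with the constructed vector, which gives the stated representation.
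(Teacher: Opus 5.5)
Your proposal is correct, and its skeleton is the paper's: the root $r$ from Lemma~\ref{lem:hetero-root-general}, the boundary vector $\pi_{2c-1}A^{-1}b(r)$ from the killed-chain $M$-matrix argument, and normalization. You depart from the paper in the two steps it treats loosely.

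For the linking equation at level $2c-1$, the paper only remarks that the generator is conservative, so one balance equation is linearly dependent on the others. That is a finite-dimensional statement and does not carry over automatically to an infinite generator. Your version is the rigorous form of that remark: bounded rates and a summable $\pi$ justify $\sum_x(\pi Q)_x=\sum_y\pi_y\sum_x q_{yx}=0$, and every entry other than the one at $2c-1$ vanishes by construction or by Step~1.

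For uniqueness, the paper relies on nonsingularity of $A$. That only excludes other solutions which already have a geometric tail from level $2c-1$. You instead use the fact that an irreducible, non-explosive chain with an invariant probability vector is positive recurrent with a unique stationary law. This gives uniqueness among all stationary distributions. As a by-product it proves the sufficiency half of Lemma~\ref{lem:stability}, which the paper argues only heuristically.

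Your alternative direct check, however, does not work as described. For $c\ge 3$ the level-$(2c-2)$ equation contains $\pi_{2c-3}$. Together with \eqref{eq:hetero-char-new} it therefore only shows that $\pi_{2c-1}=r\pi_{2c-2}$ is equivalent to $\pi_{2c-2}=r\pi_{2c-3}$; it establishes neither. Likewise, rewriting the last row as $(0,\ldots,0,-\lambda,\lambda/r)$ when $c=3$ merely restates \eqref{eq:c3-b5} as $\pi_5=r\pi_4$ without verifying it. The paper's printed $c=3$ values cannot serve as a check either: they give $\pi_5/\pi_4\approx 0.73$, whereas $r\approx 0.553$.

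A correct direct route is to add the finitely many balance equations over $\mathcal B^-$. The only flow out of $\mathcal B^-$ is the arrival $2c-2\to 2c-1$, so you get the cut equation $\lambda\pi_{2c-2}=\pi_{2c-1}\sum_{\ell=1}^{c}R_\ell(1-r^\ell)/(1-r)$. Rewriting \eqref{eq:hetero-char-new} as $\lambda(1-r)=r\sum_{\ell=1}^{c}R_\ell(1-r^\ell)$ turns this into $\pi_{2c-1}=r\pi_{2c-2}$, which is exactly the linking equation. This is your conservativity argument restricted to a finite set, so no interchange of infinite sums is needed.
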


\begin{remark}
Under the symmetry condition $\mu_A=\mu_{(|A|)}$ (or homogeneous case), the heterogeneous
$M/M_D/c$ model collapses by lumpability to the homogeneous model.
Indeed, for a subset state $s$ with $|s|=m$, the detailed completion
rates reduce to the aggregated rates $R_{m,\ell}$ and $S_m$, and after
summing over all subset states of the same cardinality, the stationary
equations of the heterogeneous model become exactly the stationary
equations of the homogeneous model.

Unlike the homogeneous case, the heterogeneous model does not collapse
to a one-dimensional recursion. The geometric tail persists, but the
boundary system must be solved on the full subset state space together
with the linking levels \(c,c+1,\dots,2c-1\).
\end{remark}

\begin{remark}
For \(c=2\), after setting \(p=1\) in the solution for the \(M/M_D/2\) model given in \cite{Thapa-Zhao:2026} to match the smallest-index routing rule used in the present paper, the two solutions coincide exactly: they have the same characteristic root \(r\), the same geometric tail, and the same boundary probabilities \(\pi_0,\pi_{(1,0)},\pi_{(0,1)}\).

For $c=2$, setting $\mu_{12}=0$ reduces the $M/M_D/2$ model to the classical
two-server heterogeneous $M/M_i/2$ queue with independent service times.
Under the smallest-index routing rule (equivalently, Singh's rule that an
arrival chooses server 1 when both servers are free) and with $\beta=1$ (no balking),
our stationary probabilities coincide exactly with those in Singh~\cite{Singh:1970}.
\end{remark}

\begin{remark}
For multi-server queues with heterogeneous exponential servers and a
single waiting line, explicit stationary distributions are known only in
special cases such as the two-server model studied by
Singh~\cite{Singh:1970}. For $c>2$, literature studies no-longer focused on finding explicit solutions, or explicit structures of the solution.
Therefore, it is worthwhile to emphasize that our framework includes the
classical heterogeneous-server \(M/M_i/c\) queue as a special case.
\end{remark}

\begin{remark}[Numerical computation]
The stationary distribution can be computed in a numerically stable
manner as follows.

\begin{enumerate}
\item Compute the unique root \(r\in(0,1)\) of the characteristic
equation \eqref{eq:hetero-char-new}.

\item Substitute the geometric tail
\[
\pi_n
=
\pi_{2c-1}r^{\,n-(2c-1)},
\qquad n\ge 2c-1,
\]
into the boundary equations and construct the reduced finite system
\eqref{eq:hetero-reduced}.

\item Compute the boundary probabilities using a numerically stable
finite-state solver such as the Grassmann--Taksar--Heyman (GTH)
algorithm.

\item Determine \(\pi_{2c-1}\) from
\eqref{eq:hetero-normalization}.
\end{enumerate}

This procedure avoids direct matrix inversion and preserves
nonnegativity of the computed probabilities.
\end{remark}

We now consider a special case of the heterogeneous $M/M_D/c$ queue by assuming that
\[
\mu_A=0, \qquad \text{for all } A\subseteq \{1,\ldots,c\}\text{ with } |A|\ge 2.
\]
Then the model reduces to a single-queue multi-server system with Poisson
arrivals and independent exponential service times with heterogeneous
rates \(\mu_1,\ldots,\mu_c\), which we denote by \(M/M_i/c\).

It should be noted that the $M/M_i/c$ model inherits the routing rule of the $M/M_D/c$ model,
namely that an arriving customer is assigned to the idle server with the
smallest index, or joins the queue if all servers are busy.

\begin{corollary}[The $M/M_i/c$ queue as a special case of the $M/M_D/c$ model]
\label{cor:MMi-c}
Consider the $M/M_i/c$ queueing model. Let
\[
M:=\sum_{i=1}^c \mu_i.
\]
Then the queue is stable if and only if
\[
\lambda < M.
\]

Moreover, the stationary distribution satisfies the following properties.

\begin{itemize}
\item[(i)] For \(n\ge c+1\), the full-busy states satisfy the recursion
\begin{equation}\label{eq:MMi-tail}
(\lambda+M)\pi_n
=
\lambda \pi_{n-1}
+
M\pi_{n+1}.
\end{equation}

\item[(ii)] The geometric tail is given by
\begin{equation}\label{eq:MMi-geo}
\pi_n=\pi_c r^{\,n-c}, \qquad n\ge c,
\end{equation}
where
\begin{equation}\label{eq:MMi-r}
r=\frac{\lambda}{M}.
\end{equation}

\item[(iii)] For a subset state \(s\subseteq \{1,\ldots,c\}\) with
\(|s|=m\le c-2\), the balance equation is
\begin{equation}\label{eq:MMi-boundary-subset}
\Bigl(\lambda+\sum_{i\in s}\mu_i\Bigr)\pi_s
=
\sum_{u:\,a(u)=s}\lambda \pi_u
+
\sum_{j\notin s}\mu_j\,\pi_{s\cup\{j\}}.
\end{equation}

\item[(iv)] For a subset state \(s\subseteq \{1,\ldots,c\}\) with
\(|s|=c-1\), let \(j(s)\) be the unique idle server, i.e.
\[
\{j(s)\}=\{1,\ldots,c\}\setminus s.
\]
Then the balance equation is
\begin{equation}\label{eq:MMi-boundary-cminus1}
\Bigl(\lambda+\sum_{i\in s}\mu_i\Bigr)\pi_s
=
\sum_{u:\,a(u)=s}\lambda \pi_u
+
\mu_{j(s)}\,\pi_c.
\end{equation}

\item[(v)] The linking equation at level \(c\) is
\begin{equation}\label{eq:MMi-link}
(\lambda+M)\pi_c
=
\lambda \sum_{|s|=c-1}\pi_s
+
M\pi_{c+1}.
\end{equation}
Using \eqref{eq:MMi-geo}, this may be written equivalently as
\begin{equation}\label{eq:MMi-link2}
M\pi_c
=
\lambda \sum_{|s|=c-1}\pi_s,
\qquad\text{or}\qquad
\pi_c
=
r\sum_{|s|=c-1}\pi_s.
\end{equation}

\item[(vi)] The normalization condition is
\begin{equation}\label{eq:MMi-normalization}
\sum_{\substack{s\subseteq \{1,\ldots,c\}\\ |s|\le c-1}} \pi_s
+
\frac{\pi_c}{1-r}
=
1.
\end{equation}
\end{itemize}

Hence the stationary distribution of the \(M/M_i/c\) queue is determined
by the geometric tail \eqref{eq:MMi-geo} together with the finite boundary
system \eqref{eq:MMi-boundary-subset}--\eqref{eq:MMi-link2} and the
normalization condition \eqref{eq:MMi-normalization}.
\end{corollary}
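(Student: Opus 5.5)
The plan is to specialize the general heterogeneous framework to $\mu_A=0$ for all $|A|\ge2$ and then simplify each ingredient. With this choice, $R_1=M$ and $R_\ell=0$ for $\ell\ge2$. Hence $\Sigma=M$ and $\Theta=\sum_\ell \ell R_\ell=M$, and Lemma~\ref{lem:hetero-root-general} gives a unique root $r\in(0,1)$ if and only if $\lambda<M$; this is also Lemma~\ref{lem:stability}, since $\mu^{\{i\}}_i=\mu_i$. Because only single completions are possible, a departure from a full-busy level $n$ moves the chain only to $n-1$ (or, from level $c$, to a subset state of size $c-1$). The full-busy balance equation \eqref{eq:hetero-boundary-level} therefore loses its higher-order terms and becomes \eqref{eq:MMi-tail}. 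This holds already for $n\ge c+1$, not merely $n\ge 2c$, since there are no multi-step jumps into levels below $c$. This gives (i).

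For (ii), substitute $\pi_n=\pi_c r^{n-c}$ into \eqref{eq:MMi-tail}. This yields $Mr^2-(\lambda+M)r+\lambda=(r-1)(Mr-\lambda)=0$, whose unique root in $(0,1)$ is $r=\lambda/M$. The tail may start at level $c$ because the recursion is valid from $n=c+1$ onward and involves only neighbouring levels. Concretely, once $\pi_{c+1}=r\pi_c$ is established, induction gives $\pi_n=\pi_c r^{n-c}$. That relation comes from the cut equation between $\{n\ge c+1\}$ and the rest: $\lambda\pi_n=M\pi_{n+1}$ for $n\ge c$.

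For (iii)--(iv), evaluate $\nu_u(B)$ and $\alpha(s)$ in \eqref{eq:hetero-boundary}. With only singleton shocks, $\alpha(s)=\sum_{i\in s}\mu_i$. Moreover $\nu_u(u\setminus s)=\mu_j$ exactly when $u=s\cup\{j\}$ and vanishes otherwise. The term $\Gamma_{\ell,s}$ survives only for $\ell=1$ and $m=c-1$: a single shock to server $j(s)$ at level $c$ produces $s$, and shocks at levels $n\ge c+1$ do not lead to subset states because waiting customers immediately refill the freed server. This yields \eqref{eq:MMi-boundary-subset} and \eqref{eq:MMi-boundary-cminus1}.

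For (v), take \eqref{eq:hetero-boundary-level} at $n=c$ and obtain \eqref{eq:MMi-link}. Substituting $\pi_{c+1}=r\pi_c$ and $Mr=\lambda$ gives \eqref{eq:MMi-link2}, which can also be read as the flow balance across the cut between level $c$ and the subset states of size $c-1$. For (vi), sum the geometric series. Existence, uniqueness and positivity of the boundary solution follow from the consistency theorem for the heterogeneous model, since the chain is still irreducible under smallest-index routing.

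The main obstacle is bookkeeping rather than analysis. The general framework sets the linking level at $2c-1$, whereas here it is $c$. I would justify this shift either by the cut argument above or by verifying that the general boundary equations for levels $c+1,\dots,2c-1$ are consistent with geometric decay starting from level $c$ once $R_\ell=0$ for $\ell\ge2$.
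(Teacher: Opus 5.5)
Your proposal is correct and follows essentially the paper's proof: you specialize to $R_1=M$ and $R_\ell=0$ for $\ell\ge 2$, factor the characteristic equation as $(r-1)(Mr-\lambda)=0$, and read off the subset-state, linking and normalization equations from the single-completion dynamics. Your cut-equation step $\lambda\pi_n=M\pi_{n+1}$ for $n\ge c$ is a worthwhile addition: it rigorously justifies starting the tail at level $c$ rather than at the general linking level $2c-1$, where the paper only asserts that the recursion ``yields the geometric solution,'' and by summability it would also give the ``only if'' half of the stability claim directly.
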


\begin{proof}
Under the assumption \(\mu_A=0\) for all \(|A|\ge 2\), only single-server
completions remain. Therefore
\[
R_1=\sum_{i=1}^c \mu_i = M,
\qquad
R_\ell=0,\quad \ell\ge 2.
\]
The general characteristic equation
\[
\lambda+\sum_{\ell=1}^{c}R_\ell r^{\ell+1}
=
r\Bigl(\lambda+\sum_{\ell=1}^{c}R_\ell\Bigr)
\]
reduces to
\[
\lambda+Mr^2=r(\lambda+M),
\]
or equivalently,
\[
(r-1)(Mr-\lambda)=0.
\]
Hence the unique root in \((0,1)\) is
\[
r=\frac{\lambda}{M},
\]
which exists if and only if \(\lambda<M\).

Since only one server can complete service at a time, the full-busy recursion
for levels \(n\ge c+1\) becomes \eqref{eq:MMi-tail}, which yields the
geometric solution \eqref{eq:MMi-geo}. The subset-state equations
\eqref{eq:MMi-boundary-subset} and \eqref{eq:MMi-boundary-cminus1} follow
from the fact that departures from a subset state may only occur by the
completion of a single busy server, while arrivals follow the smallest-index
idle-server rule. The linking equation \eqref{eq:MMi-link} is the balance
equation at the first full-busy level \(c\). Finally, \eqref{eq:MMi-normalization}
is immediate from summing the boundary probabilities and the geometric tail.
\end{proof}

\begin{remark}
For fixed \(c\), all stationary probabilities \(\pi_s\) can in principle be
expressed explicitly by solving the finite boundary linear system, for example
via matrix inversion or Cramer's rule. This result has not been reported in the literature before, except the case of $c=2$. 
However, it should be noted that except for small values of
\(c\), these expressions are not compact and offer little structural insight.
\end{remark}

\begin{corollary}[Explicit solution for the $M/M_i/2$ queue]
\label{cor:MMi2}
Consider the $M/M_i/2$ queue obtained from the $M/M_D/2$ model by setting
\[
\mu_{12}=0.
\]
Assume the routing rule used throughout this paper: when both servers are
idle, an arrival is assigned to server \(1\) (equivalently, \(p=1\) in the
earlier \(M/M_D/2\) notation). Let
\[
\rho:=\frac{\lambda}{\mu_1+\mu_2},
\qquad \lambda<\mu_1+\mu_2.
\]
Then the stationary probabilities are given by
\begin{align}
\pi_{(1,0)} &= \frac{1+\rho}{1+2\rho}\,\frac{\lambda}{\mu_1}\,\pi_0, \label{eq:MMi2-p10}\\
\pi_{(0,1)} &= \frac{\rho}{1+2\rho}\,\frac{\lambda}{\mu_2}\,\pi_0, \label{eq:MMi2-p01}\\
\pi_2 &= \rho\bigl(\pi_{(1,0)}+\pi_{(0,1)}\bigr), \label{eq:MMi2-p2}\\
\pi_n &= \rho^{\,n-2}\pi_2, \qquad n\ge 2, \label{eq:MMi2-tail}
\end{align}
where
\begin{equation}\label{eq:MMi2-p0}
\pi_0=
\left[
1+\frac{1+\rho}{1+2\rho}\frac{\lambda}{\mu_1}
+\frac{\rho}{1+2\rho}\frac{\lambda}{\mu_2}
+\frac{1}{1-\rho}\,
\frac{\rho\lambda}{1+2\rho}
\left(\frac{1+\rho}{\mu_1}+\frac{\rho}{\mu_2}\right)
\right]^{-1}.
\end{equation}
In particular, this coincides with Singh~\cite{Singh:1970} for the
two-server heterogeneous independent-service model with \(\beta=1\).
\end{corollary}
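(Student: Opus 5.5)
The plan is to specialize Corollary~\ref{cor:MMi-c} to $c=2$ with $\mu_{12}=0$ and then solve the resulting finite boundary system by hand. Uniqueness will come from the consistency-and-uniqueness theorem for the heterogeneous model. By that theorem it is enough to exhibit one nonnegative solution of the balance equations that satisfies the normalization condition; that solution is then the stationary distribution.

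\textbf{Step 1: write down the balance equations.} The state space is $\{0,(1,0),(0,1),2,3,\ldots\}$. An arrival to the empty system goes to server $1$, and $M=\mu_1+\mu_2$. Part~(ii) of Corollary~\ref{cor:MMi-c} with $r=\lambda/M=\rho$ gives \eqref{eq:MMi2-tail} directly. Part~(v) gives the linking relation $\pi_2=\rho(\pi_{(1,0)}+\pi_{(0,1)})$, which is \eqref{eq:MMi2-p2}. The remaining boundary equations from (iii)--(iv) are
\[
\lambda\pi_0=\mu_1\pi_{(1,0)}+\mu_2\pi_{(0,1)},\qquad
(\lambda+\mu_1)\pi_{(1,0)}=\lambda\pi_0+\mu_2\pi_2,\qquad
(\lambda+\mu_2)\pi_{(0,1)}=\mu_1\pi_2 .
\]
The third equation holds because state $(0,1)$ is entered only by a server-$1$ completion from level $2$; there is no arrival into it, since arrivals prefer server $1$.

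\textbf{Step 2: verify the claimed formulas.} Substituting \eqref{eq:MMi2-p10}--\eqref{eq:MMi2-p01} into the state-$0$ equation gives
\[
\mu_1\pi_{(1,0)}+\mu_2\pi_{(0,1)}=\lambda\pi_0\,\frac{(1+\rho)+\rho}{1+2\rho}=\lambda\pi_0 ,
\]
so that equation holds. Next, combining \eqref{eq:MMi2-p2} with the candidate values gives
\[
\pi_2=\frac{\rho\lambda\pi_0}{1+2\rho}\Bigl(\frac{1+\rho}{\mu_1}+\frac{\rho}{\mu_2}\Bigr).
\]
Using this, the $(0,1)$ equation reduces to
\[
\frac{\lambda}{\mu_2}+1=(1+\rho)+\frac{\rho\mu_1}{\mu_2}.
\]
This identity holds because $\rho(\mu_1+\mu_2)=\lambda$. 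The $(1,0)$ equation is not checked separately: the generator is conservative, so one balance equation depends on the others once all the rest (including the tail and the linking equation) hold.

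\textbf{Step 3: normalize.} All the expressions are positive multiples of $\pi_0$. Summing them, with the tail contributing $\pi_2/(1-\rho)$, gives exactly the bracket in \eqref{eq:MMi2-p0}.

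\textbf{Main obstacle and cross-check.} The only real work is the algebra in Step~2, and the identity $\rho M=\lambda$ is what makes it close. The coincidence with Singh~\cite{Singh:1970} at $\beta=1$ would then be a direct comparison of formulas. Alternatively, it follows from the earlier remark that the $M/M_D/2$ solution with $p=1$ matches Singh's when $\mu_{12}=0$.
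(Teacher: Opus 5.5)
Your proof is correct and uses the same ingredients as the paper's: the balance equations at $0,(1,0),(0,1),2$, the tail with $r=\rho$, the linking identity $\pi_2=\rho(\pi_{(1,0)}+\pi_{(0,1)})$, the relation $\rho(\mu_1+\mu_2)=\lambda$, and normalization. The only difference is direction. The paper derives the formulas by elimination through the $(0,1)$ and $(1,0)$ equations, and never checks the state-$0$ equation, which it treats implicitly as redundant. You instead verify the candidates against the state-$0$ and $(0,1)$ equations, drop the $(1,0)$ equation by conservation (legitimate since the candidate is summable and the rates are bounded), and then import uniqueness.
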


\begin{proof}
With \(\mu_{12}=0\), only single-server completions remain. Hence
\[
R_1=\mu_1+\mu_2, \qquad R_2=0.
\]
The characteristic equation
\[
\lambda+R_1 r^2 = r(\lambda+R_1)
\]
becomes
\[
\lambda+(\mu_1+\mu_2)r^2
=
r(\lambda+\mu_1+\mu_2),
\]
so
\[
(r-1)\bigl((\mu_1+\mu_2)r-\lambda\bigr)=0.
\]
Therefore the unique root in \((0,1)\) is
\[
r=\rho=\frac{\lambda}{\mu_1+\mu_2}.
\]

The relevant boundary states are \(0,(1,0),(0,1),2\). Since arrivals are
routed to server \(1\) when the system is empty, the balance equations are
\begin{align}
\lambda \pi_0 &= \mu_1 \pi_{(1,0)}+\mu_2 \pi_{(0,1)}, \label{eq:MMi2-b0}\\
(\lambda+\mu_1)\pi_{(1,0)} &= \lambda \pi_0 + \mu_2 \pi_2, \label{eq:MMi2-b10}\\
(\lambda+\mu_2)\pi_{(0,1)} &= \mu_1 \pi_2, \label{eq:MMi2-b01}\\
(\lambda+\mu_1+\mu_2)\pi_2 &= \lambda\bigl(\pi_{(1,0)}+\pi_{(0,1)}\bigr)
+(\mu_1+\mu_2)\pi_3. \label{eq:MMi2-b2}
\end{align}
Using the geometric tail \eqref{eq:MMi2-tail}, we have
\[
\pi_3=\rho\,\pi_2
\]
and \eqref{eq:MMi2-b2} reduces to
\[
(\lambda+\mu_1+\mu_2)\pi_2
=
\lambda\bigl(\pi_{(1,0)}+\pi_{(0,1)}\bigr)
+(\mu_1+\mu_2)\rho\,\pi_2
=
\lambda\bigl(\pi_{(1,0)}+\pi_{(0,1)}\bigr)+\lambda\pi_2.
\]
Hence
\[
\pi_2=\rho\bigl(\pi_{(1,0)}+\pi_{(0,1)}\bigr),
\]
which proves \eqref{eq:MMi2-p2}.

Next, from \eqref{eq:MMi2-b01},
\[
\pi_{(0,1)}
=
\frac{\mu_1}{\lambda+\mu_2}\,\pi_2.
\]
Substituting \eqref{eq:MMi2-p2} gives
\[
\pi_{(0,1)}
=
\frac{\mu_1}{\lambda+\mu_2}\,
\rho\bigl(\pi_{(1,0)}+\pi_{(0,1)}\bigr).
\]
Equivalently,
\[
\bigl(\lambda+\mu_2-\mu_1\rho\bigr)\pi_{(0,1)}
=
\mu_1\rho\,\pi_{(1,0)}.
\]
Using \(\lambda=\rho(\mu_1+\mu_2)\), this becomes
\[
(1+\rho)\mu_2\,\pi_{(0,1)}
=
\rho\mu_1\,\pi_{(1,0)},
\]
so
\begin{equation}\label{eq:ratio}
\pi_{(0,1)}
=
\frac{\rho\mu_1}{(1+\rho)\mu_2}\,\pi_{(1,0)}.
\end{equation}

Now substitute \eqref{eq:MMi2-p2} into \eqref{eq:MMi2-b10}:
\[
(\lambda+\mu_1)\pi_{(1,0)}
=
\lambda\pi_0+\mu_2\rho\bigl(\pi_{(1,0)}+\pi_{(0,1)}\bigr).
\]
Using \(\lambda=\rho(\mu_1+\mu_2)\), we obtain
\[
\mu_1\pi_{(1,0)}
=
\lambda\pi_0+\rho\mu_2\,\pi_{(0,1)}.
\]
Substitute \eqref{eq:ratio}:
\[
\mu_1\pi_{(1,0)}
=
\lambda\pi_0+\frac{\rho^2\mu_1}{1+\rho}\,\pi_{(1,0)}.
\]
Hence
\[
\pi_{(1,0)}
=
\frac{\lambda}{\mu_1\left(1-\frac{\rho^2}{1+\rho}\right)}\,\pi_0
=
\frac{1+\rho}{1+2\rho}\frac{\lambda}{\mu_1}\,\pi_0,
\]
which is \eqref{eq:MMi2-p10}. Then \eqref{eq:MMi2-p01} follows from
\eqref{eq:ratio}:
\[
\pi_{(0,1)}
=
\frac{\rho\mu_1}{(1+\rho)\mu_2}\pi_{(1,0)}
=
\frac{\rho}{1+2\rho}\frac{\lambda}{\mu_2}\pi_0.
\]

Finally, \eqref{eq:MMi2-p0} follows from normalization:
\[
1=\pi_0+\pi_{(1,0)}+\pi_{(0,1)}+\sum_{n=2}^\infty \pi_n,
\]
together with
\[
\sum_{n=2}^\infty \pi_n
=
\frac{\pi_2}{1-\rho}
=
\frac{\rho}{1-\rho}\bigl(\pi_{(1,0)}+\pi_{(0,1)}\bigr).
\]

The formulas coincide with Singh’s two-server heterogeneous independent
model when \(\beta=1\), since Singh’s tail is
\[
P_n=\rho^{\,n-2}P_2,\qquad n\ge 2,
\]
and his boundary probabilities \(P_{10}\) and \(P_{01}\) have exactly the
same form as \eqref{eq:MMi2-p10}--\eqref{eq:MMi2-p01}. \cite{Singh:1970}
\end{proof}

\section{Marshall--Olkin service dependence and queue-state Markovianity
beyond the $M/M_D/c$ model}

The previous section established that the $M/M_D/c$ queueing process is a
continuous-time Markov chain under the Marshall--Olkin service mechanism.
The Marshall--Olkin structure, however, is not specific to the $M/M_D/c$
model. It provides a general dependence mechanism for service
completions in a broad class of queueing systems.

The first result shows that Marshall--Olkin service dependence preserves
Markovianity whenever the queueing dynamics are driven by
time-homogeneous state-dependent routing and scheduling rules.

\begin{theorem}[Markovianity under Marshall--Olkin service dependence]
\label{thm:MO-service-Markovianity}
Consider a time-homogeneous queueing system with finitely many servers
indexed by $\{1,\ldots,c\}$. Suppose that arrivals are generated by
Poisson processes and that service completions are governed by a
Marshall--Olkin shock system. That is, for each nonempty subset
$A\subseteq\{1,\ldots,c\}$, there is an independent Poisson process
\[
N_A(t),\qquad t\ge0,
\]
with rate $\mu_A$, and a jump of $N_A$ simultaneously completes the
services currently being processed at the busy servers in $A$.

Let $Y(t)$ denote a state descriptor containing all queueing variables
needed to determine the future evolution of the system, such as queue
lengths, busy-server indicators, customer locations, routing information,
and scheduling variables. Assume that all routing, scheduling,
service-allocation, and service-initiation rules are time-homogeneous and
depend only on the current state $Y(t)$.

Then $\{Y(t):t\ge0\}$ is a time-homogeneous continuous-time Markov chain.
\end{theorem}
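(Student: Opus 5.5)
The plan is to follow the sample-path argument used for the $M/M_D/c$ queueing process in Section~3, now with a general time-homogeneous transition rule. First I will set up a pathwise construction. Let $\{N_k\}$ be the finitely many Poisson arrival processes, and let $\{N_A:A\in\mathcal P\}$ be the independent shock processes; all of them are mutually independent. Then superpose them into a single marked Poisson process with total rate $\Lambda=\sum_k\lambda_k+\sum_A\mu_A<\infty$. Its jump times are $\tau_1<\tau_2<\cdots$, and each jump carries a mark identifying its source. Conditionally on a jump, the mark is independent of the jump times and of the past, with probabilities proportional to the rates.

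The time-homogeneous rules then supply a deterministic measurable map $\Psi$. Given the pre-jump state and the mark, $\Psi$ returns the post-jump state. An arrival mark is handled by the routing rule. A shock of type $A$ completes the services at the busy servers in $A$ and then applies the service-initiation rule. If $A$ contains no busy server, the state is unchanged. We set $Y(t)=Y(\tau_n)$ on $[\tau_n,\tau_{n+1})$, with $Y(\tau_{n+1})=\Psi(Y(\tau_n),\text{mark}_{n+1})$. Since $\Lambda<\infty$, the jumps do not accumulate, so $Y$ is defined for all $t\ge0$. A key point is that the service of a customer at server $i$ ends at the first shock after the customer entered service whose type contains $i$. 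By Lemma~\ref{lem:closure} and Lemma~\ref{rem:part-of-lack-of-memory-2}, the residual service vector at the busy servers is therefore Marshall--Olkin with the marginal rates \eqref{eqn:marginal-rate}, regardless of elapsed service ages. This is why $Y(t)$ does not need to record ages.

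Next I will prove the Markov property. Take $\mathcal F_t$ to be the filtration generated by all the driving processes up to time $t$. The increments after $t$ are independent of $\mathcal F_t$ and, by stationarity of the increments, have the same law as the original processes. By construction, the path $\{Y(t+u):u\ge0\}$ is a fixed measurable functional of $Y(t)$ and of these shifted increments, namely the same functional $\Psi$ iterated. Hence $E[f(Y(t+u))\mid\mathcal F_t]=h_u(Y(t))$, where $h_u$ does not depend on $t$, which gives a time-homogeneous Markov property. In fact the same holds at stopping times, since the strong Markov property of Poisson processes yields the strong Markov property of $Y$.

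Finally I will identify $Y$ as a CTMC. From state $y$, the holding time is the first time a mark arrives that changes the state. Thinning the superposed process gives an exponential holding time with rate $q(y)=\sum_k\lambda_k\mathbf 1\{\Psi(y,k)\ne y\}+\sum_A\mu_A\mathbf 1\{\Psi(y,A)\ne y\}$. The jump rates are $q(y,y')=\sum_{\text{marks }m:\Psi(y,m)=y'}\mathrm{rate}(m)$, and since $q(y)\le\Lambda$ the chain is non-explosive. The main obstacle is making precise that the state descriptor really is closed under the rules. The rules may depend only on $Y$, and we must exclude hidden dependence on anything else, for example server identities of waiting customers or any residual clocks. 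I would address this by stating the assumption as the existence of the map $\Psi$, and by using the shock representation so that no service clock is ever attached to a customer.
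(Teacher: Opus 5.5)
Your proposal is correct and follows essentially the same argument as the paper: the future path $\{Y(t+u):u\ge0\}$ is a fixed functional of $Y(t)$ and of the post-$t$ increments of the independent Poisson drivers, and these increments are independent of $\mathcal F_t$ and have a law that does not depend on $t$. Your explicit map $\Psi$, the thinning computation of $q(y)$ (which correctly handles shocks that hit only idle servers, so not every driver jump is a transition) and the non-explosion bound $q(y)\le\Lambda$ make rigorous steps the paper states in one line; you should also include the initial state $Y(0)$ in $\mathcal F_t$, as the paper does.
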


\begin{proof}
Let
\[
\mathcal P
=
2^{\{1,\ldots,c\}}\setminus\{\emptyset\}.
\]

Let $\mathcal F_t$ be the filtration generated by the arrival processes,
the shock processes, and the initial state up to time $t$.

By the independent-increment property of Poisson processes, for every
$A\in\mathcal P$ and $u\ge0$,
\[
N_A(t+u)-N_A(t)
\]
is independent of $\mathcal F_t$ and has the same distribution as the
corresponding increment starting from time $0$. The same property holds
for the arrival processes.

Given the current state $Y(t)$, the future evolution of the queueing
system is completely determined by the future increments of the arrival
processes, the future increments of the shock processes, and the
prescribed routing, scheduling, service-allocation, and service-initiation
rules. By assumption, these rules are time-homogeneous and depend only on
the current state.

Therefore the conditional distribution of the future process depends on
the past only through the current state $Y(t)$. Hence, for every bounded
measurable function $f$ and every $u\ge0$,
\[
\mathbb E\!\left[
f(Y(t+u))
\mid
\mathcal F_t
\right]
=
\mathbb E\!\left[
f(Y(t+u))
\mid
Y(t)
\right].
\]

Thus $\{Y(t):t\ge0\}$ is Markov.

Since transitions occur only at jump times of Poisson arrival processes
or Poisson shock processes, holding times are exponential and transition
rates are time-homogeneous. Therefore $\{Y(t):t\ge0\}$ is a
time-homogeneous continuous-time Markov chain.
\end{proof}

\begin{remark}[Scope of the theorem]
The theorem applies to a broad range of queueing architectures.

Examples include:
\begin{itemize}
\item the single-queue $M/M_D/c$ model studied in this paper, where
      $Y(t)$ records the queue length and the current set of busy
      servers;
\item parallel $M/M/1$ queues with Marshall--Olkin dependent service
      completions, where
      \[
      Y(t)=(Q_1(t),\ldots,Q_c(t))
      \]
      records the total numbers of customers at the queues, including
      customers currently in service;

\item tandem queues with Marshall--Olkin dependent service completions,
      where
      \[
      Y(t)=(Q_1(t),\ldots,Q_c(t))
      \]
      records the total numbers of customers at the stations,
      including customers currently in service;
\item more general finite queueing networks, provided that the state
      descriptor contains all information needed for routing,
      scheduling, and service initiation.
\end{itemize}

Thus the $M/M_D/c$ model should be viewed as a special case of a larger
family of queueing systems driven by Marshall--Olkin dependent service
completions.
\end{remark}

The next theorem establishes a converse result. It shows that if a
queueing process is Markovian without explicitly tracking elapsed service
times, residual service times, or service phases, then the service
mechanism must possess a weak multivariate lack-of-memory property.

\begin{theorem}[Queue-state Markovianity and Marshall--Olkin service structure]
\label{thm:queue-state-Markovianity}
Consider a queueing system with finitely many servers indexed by
$\{1,\ldots,c\}$. Suppose that the state descriptor $Y(t)$ records the
queue lengths, busy-server set, customer locations, and all routing or
scheduling variables, but does not record elapsed service times,
residual service times, or service phases.

Assume that whenever a nonempty set
$B\subseteq\{1,\ldots,c\}$ of servers is busy, the joint remaining
service-completion vector associated with the active services has a
time-homogeneous distribution depending only on $B$, and that service
initiations restart the same family of service-time laws.

If $\{Y(t):t\ge0\}$ is a time-homogeneous continuous-time Markov chain,
then for each nonempty busy set $B$, the service-time vector
\[
(T_i:i\in B)
\]
satisfies the weak multivariate lack-of-memory property. Consequently,
under the Marshall--Olkin characterization of weak multivariate
lack-of-memory distributions, the service-completion law on $B$ is
Marshall--Olkin multivariate exponential.
\end{theorem}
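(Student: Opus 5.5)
The plan is to extract the weak lack-of-memory identity directly from the Markov property of $Y(t)$ on a time interval during which the busy set does not change. Then Theorem~\ref{thm:MO-characterization} upgrades this identity to the Marshall--Olkin form.

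Fix a nonempty busy set $B$ and a nonempty subset $A\subseteq B$. Consider a sample path in which, at time $0$, the services on $B$ have just been initiated. The assumption that initiations restart the same family of laws means their joint service vector is $(T_i:i\in B)$. Arrivals do not alter the residual service times of customers already in service, so I would work with the event that no service on $A$ has completed by time $x_i$, for each $i\in A$.

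The first key step is to realize these conditioning events through the state process. I would use a staggered construction: server $i\in A$ starts service at time $\tau-x_i$, where $\tau=\max_{i\in A}x_i$. This is achieved by routing arrivals appropriately, which is possible because routing is state-dependent and arrivals are Poisson, so any finite arrival pattern has positive probability. I would then condition on the history $\mathcal H_\tau$ in which none of the servers in $A$ has completed by time $\tau$.

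Two facts then combine. The Markov property says the conditional law of the future given $\mathcal H_\tau$ depends only on $Y(\tau)$. Since $Y(\tau)$ records no elapsed times, $Y(\tau)$ is the same state that would arise if all services in $A$ had just started. Because the service law is time-homogeneous and depends only on the busy set, the probability that no server in $A$ completes in $(\tau,\tau+t]$ equals $\mathbb P(T_i>t,\ i\in A)$. Computing the same quantity from the service law instead gives
\[
\mathbb P(T_i>x_i+t,\ i\in A\mid T_i>x_i,\ i\in A).
\]
Equating the two expressions yields Definition~\ref{def:lack-of-memory} for the subvector $A$, for every $A\subseteq B$.

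The final step applies Theorem~\ref{thm:MO-characterization} to $(T_i:i\in B)$. The theorem's regularity hypotheses (positivity and continuity of the marginal survival functions, and $\mathbb P(T_i>0)=1$) must be assumed implicitly, since they are what the phrase ``under the Marshall--Olkin characterization'' invokes.

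The main obstacle is making the staggered-start conditioning rigorous. Three things need care. First, the conditional law must not leak information through other servers in $B\setminus A$ or through later events that depend on the elapsed ages. Second, the events $\{T_i>x_i\}$ should be expressed as events in the state history, namely that the busy set still contains $A$ and no departure occurred from $A$. Third, I need to invoke time-homogeneity carefully so that ``service ages'' are truly invisible in $Y$.

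I would first handle this by working on the finite horizon $[0,\tau+t]$ and conditioning on the positive-probability event that no arrivals disturb servers outside the designed pattern. Since the descriptor determines the future law of completions on $A$, the two conditional probabilities must coincide. A secondary subtlety is that completions at servers in $B\setminus A$ may change the busy set. I would avoid this by taking the busy set equal to $A$ itself, using the assumption that the law on $A$ is the marginal of the law on $B$; if that compatibility is not available, by applying the argument directly with $B$ replaced by $A$.
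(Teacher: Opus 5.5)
Your proposal is essentially the paper's argument. The paper observes that, because $Y(t)$ carries no age information, the Markov property makes the conditional residual law given survival to ages $x_i$ independent of the $x_i$. This yields $P(T_i>x_i+s,\ i\in B\mid T_i>x_i,\ i\in B)=P(T_i>s,\ i\in B)$, and the paper then invokes Theorem~\ref{thm:MO-characterization}. For the marginal subvectors it uses your fallback, applying the same argument to every busy set and thereby tacitly assuming the cross-busy-set compatibility you flag. Your staggered-start construction and explicit regularity hypotheses therefore only make precise what the paper leaves implicit.
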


\begin{proof}
Fix a nonempty busy set
\[
B\subseteq\{1,\ldots,c\}.
\]

Let
\[
(T_i:i\in B)
\]
denote the service-time vector associated with the active services at the
servers in $B$.

Suppose that at some time $t$ the servers in $B$ are busy and that the
currently active services have already received service for amounts
\[
x_i,\qquad i\in B.
\]

Since the state descriptor $Y(t)$ does not record elapsed service times,
residual service times, or service phases, the Markov property implies
that the conditional law of the future evolution given $Y(t)$ cannot
depend on the values $\{x_i:i\in B\}$.

Consequently, for every $s\ge0$,
\[
P(T_i>x_i+s,\ i\in B
\mid
T_i>x_i,\ i\in B)
=
P(T_i>s,\ i\in B).
\]

Since this holds for all $s\ge0$ and all $x_i\ge0$, the service-time
subvector $(T_i:i\in B)$ satisfies the weak multivariate lack-of-memory
property.

Because the argument applies to every nonempty busy set
$B\subseteq\{1,\ldots,c\}$, every marginal service-time subvector
satisfies the weak multivariate lack-of-memory property.

By the Marshall--Olkin characterization theorem, each such service-time
subvector must therefore follow a Marshall--Olkin multivariate
exponential distribution.
\end{proof}

\begin{remark}
The theorem does not assert that every Markovian queueing model must
have Marshall--Olkin service times. If residual service times, service
phases, or other auxiliary variables are incorporated into the state
descriptor, many additional service-time models become compatible with
Markovian dynamics.

The significance of the theorem is that, among queueing systems whose
state descriptor does not explicitly track service-age information,
queue-state Markovianity forces a weak multivariate lack-of-memory
property and therefore leads naturally to the Marshall--Olkin family.
\end{remark}

\begin{remark}[An equivalence principle]
Taken together,
Theorems~\ref{thm:MO-service-Markovianity}
and~\ref{thm:queue-state-Markovianity}
show that, within the class of queueing systems whose state descriptor
does not contain elapsed service times, residual service times, or
service phases, the Marshall--Olkin multivariate exponential
distribution is precisely the service-time dependence structure
compatible with queue-state Markovianity.

Thus the Marshall--Olkin family plays a role analogous to that of the
ordinary exponential distribution in classical Markovian queueing
theory.
\end{remark}

\section{Conclusion}

In this paper, we investigated multi-server queueing systems with
correlated service times through the $M/M_D/c$ model, where dependence
among servers is described by the Marshall--Olkin multivariate
exponential distribution.

We first developed a rigorous sample-path construction of the
$M/M_D/c$ queue and established that the resulting queue-length process
is a time-homogeneous continuous-time Markov chain. This construction
provides a transparent probabilistic interpretation of simultaneous
service completions through Marshall--Olkin shock mechanisms and forms
the foundation for the subsequent analysis.

We then studied the stationary distribution of the model. In the
homogeneous case, the system reduces to a one-dimensional Markov chain.
By combining a geometric tail representation with boundary equations, we
obtained a complete characterization of the stationary distribution and
derived explicit formulas for the boundary probabilities. In the
heterogeneous case, we developed a general framework based on a
geometric tail and a finite boundary system, and established existence,
uniqueness, and nonnegativity of the stationary distribution through an
$M$-matrix argument. The resulting framework includes the classical
heterogeneous-server $M/M_i/c$ queue as a special case and provides a
numerically tractable method for computing stationary probabilities.

Beyond the specific $M/M_D/c$ model, we established a broader connection
between Marshall--Olkin service dependence and queue-state Markovianity.
We showed that Marshall--Olkin shock mechanisms preserve Markovianity
for a large class of queueing systems whose dynamics are determined by
the current queueing state. Conversely, if such a queueing system admits
a Markovian state description without recording elapsed or residual
service times, then the service-completion mechanism must satisfy the
weak multivariate lack-of-memory property and therefore, under the
Marshall--Olkin characterization theorem, must be of Marshall--Olkin
type. This identifies Marshall--Olkin dependence as a natural and
fundamental mechanism underlying Markovian queueing models with
dependent service times.

Several directions remain open for future research. One important topic
is the analysis of performance measures beyond the stationary queue
length distribution, including waiting-time distributions, sojourn-time
distributions, departure processes, and tail asymptotics. Another
promising direction is the study of more general queueing networks and
service architectures with Marshall--Olkin dependence. Finally,
many-server and mean-field limits may provide insight into the impact of
correlated service mechanisms in large-scale service systems arising in
cloud computing, communication networks, and distributed computing
platforms.

We hope that the framework developed here contributes to a deeper
understanding of dependent service mechanisms in queueing theory and
provides a foundation for further investigations of correlated service
systems.


\begin{thebibliography}{99}



%





\bibitem{BernhartEtAl2015} 
Bernhart, G., Fern{\'a}ndez, L., Mai, J.-F., Schenk, S. and Scherer, M. (2015)
A survey of dynamic representations and generalizations of the Marshall--Olkin distribution, in book by
Cherubini, C., Durante, F. and Mulinacci, S. (Eds.),
\emph{Marshall--Olkin Distributions: Advances in Theory and Applications},
Springer Proceedings in Mathematics \& Statistics, Vol.~141, 1--13.






%



\bibitem{Kelly:1976}
Kelly, F.P. (1976)
Networks of queues,
\textit{Adv. Appl. Prob.}, \textbf{8}, 416--432.






\bibitem{Marshall-Olkin:1967a}
Marshall, A.W. and Olkin, I. (1967)
A multivariate exponential distribution,
\textit{Journal of the American Statistical Association}.
\textbf{62}, 30--44.



\bibitem{Mitchell et al:1977}
Mitchell, C.R., Paulson, A.S. and Beswick, C.A. (1977)
The effect of correlated exponential service times on single server tandem queues,
\textit{Naval Research Logistics Quarterly},
\textbf{24(1)}, 95--112.


%








\bibitem{Singh:1970}
Singh, V.P. (1970)
Two-server Markovian queues with balking: heterogeneous vs. homogeneous servers,
\textit{Operations Research},
\textbf{18}, 145--159.

\bibitem{Thapa-Zhao:2026}
Thapa, S. and Y.Q. Zhao (2026)
Markov modelling approach for queues with correlated service times --- the $M/M_D/2$ model,
\textit{Queueing Systems}, accepted.




\end{thebibliography}
\end{document}